\theoremstyle{plain}
\definecolor{dark-gray}{gray}{0.3}
\definecolor{dkgray}{rgb}{.4,.4,.4}
\definecolor{dkblue}{rgb}{0,0,.5}
\definecolor{medblue}{rgb}{0,0,.75}
\definecolor{rust}{rgb}{0.5,0.1,0.1}
\definecolor{darkblue}{rgb}{0,0.08,0.45}
\newcommand{\norm}[1]{\left\lVert \, #1 \, \right\rVert}
\DeclareMathOperator*{\argmin}{arg\,min}
\numberwithin{theorem}{section}
\newtheorem{lemma}{Lemma}
\numberwithin{lemma}{section}
\numberwithin{remark}{section}
\numberwithin{corollary}{section}
\numberwithin{definition}{section}
\numberwithin{assumption}{section}
\declaretheorem[name=Theorem,numberwithin=section]{thm-rest}
\declaretheorem[name=Lemma,numberwithin=section]{lem-rest}
\declaretheorem[name=Corollary,numberwithin=section]{cor-rest}
\declaretheorem[name=Definition,numberwithin=section]{def-rest}
\declaretheorem[name=Proposition,numberwithin=section]{prop-rest}
\declaretheorem[name=Assumption,numberwithin=section]{ass-rest}
\def\@maketitle{%
  \newpage
  \null
  \vskip 2em%
  \begin{center}%
  \let \footnote \thanks
    {\Large \@title \par}%  Adjust the title font size here
    \vskip 1.5em%
    {\normalsize
      \lineskip .5em%
      \begin{tabular}[t]{c}%
        \@author
      \end{tabular}\par}%
    \vskip 1em%
    {\normalsize \@date}%
  \end{center}%
  \par
  \vskip 1.5em}
\def\nn{{ \nonumber }}
\DeclareMathAlphabet{\mathcalorigin}{OMS}{cmsy}{m}{n}
\title{\vspace{-2.0cm}  A Tuning-Free Primal-Dual Splitting Algorithm for Large-Scale Semidefinite Programming}
\date{}
\author{
    Yinjun Wang\thanks{Independent Researcher, yinjunwang2001@gmail.com} \and
    Haixiang Lan\thanks{Columbia University, hl3725@columbia.edu} \and
    Yinyu Ye\thanks{Stanford University, yinyu-ye@stanford.edu}
}
\begin{document}
\small
\maketitle
\begin{abstract}
This paper proposes and analyzes a tuning-free variant of Primal-Dual Hybrid Gradient (PDHG), and investigates its effectiveness for solving large-scale semidefinite programming (SDP). The core idea is based on the combination of two seemingly unrelated results: (1) the equivalence of PDHG and Douglas-Rachford splitting (DRS) \cite{o2020equivalence}; (2) the asymptotic convergence of \emph{non-stationary} DRS \cite{lorenz2019non}. This combination provides a unified approach to analyze the convergence of generic adaptive PDHG, including the proposed tuning-free algorithm and various existing ones. Numerical experiments are conducted to show the performance of our algorithm, highlighting its superior convergence speed and robustness in the context of SDP. 
\end{abstract}

\section{Introduction}
\label{sec:intro}
Semidefinite programming (SDP) is an effective framework for convex optimization, incorporating a wide range of important classes such as linear programming (LP), convex quadratic programming, and second-order cone programming. Aside from its recognized applications in mathematical modeling and constructing convex relaxations of NP-hard problems, SDP is notably effective in resolving various machine learning problems. They include but are not limited to maximum variance unfolding \cite{kulis2007fast}, sparse principal component analysis \cite{d2004direct}, matrix completion \cite{recht2010guaranteed}, graphical model inference \cite{erdogdu2017inference}, community detection \cite{bandeira2016low}, $k$-mean clustering \cite{peng2007approximating}, neural network verification \cite{albarghouthi2021introduction}, etc.

The wide adoption of SDP is facilitated by efficient and robust algorithms. Interior-point methods (IPMs), which can provably solve SDP to arbitrary precision in polynomial time, serve as foundational approaches in traditional SDP solvers \cite{gurobi}. However, their computational costs are typically high due to the procedure of adopting Newton's method to approximately solve a Karush-Kuhn-Tucker (KKT) system in each iteration. Consequently, first order methods, such as Alternating Directions Method of Multipliers (ADMM) \cite{boyd2011distributed} and Primal-Dual Hybrid Gradient (PDHG) \cite{chambolle2011first, condat2013primal}, have much lower complexity, and thus, have gained much popularity in recent years. Each iteration of PDHG is free from solving a linear system, in contrast to ADMM, which requires solving a linear system in every iteration. However, while ADMM has been the method of choice for several popular SDP solvers \cite{wen2010alternating,zheng2020chordal, garstka2021cosmo, o2016conic, sun2020sdpnal+,zhao2010newton}, PDHG for SDP has gained few attentions \cite{souto2022exploiting}. 

To effectively implement PDHG in practice, a key challenge is the tuning of its primal and dual stepsizes. The values of these two stepsizes significantly influence its convergence speed. Relying on manual tuning is not only computationally intensive -- requiring tests across various values -- but also lacks robustness, as the stepsizes that lead to fast convergence in one scenario may not be a good choice for another. Several stepsize-adjusting strategies have been proposed to address this challenge. The primal-dual balancing technique, proposed by \cite{goldstein2013adaptive, goldstein2015adaptive}, aims to dynamically adjust the ratio between the primal and dual stepsizes, keeping the primal and dual residuals approximately equal. The idea of balancing primal and dual residuals has been further extended by \cite{yokota2017efficient,zdun2021fast} to take into account the local variation of gradient directions. Moreover, a linesearch technique has been proposed in \cite{malitsky2018first}, adjusting stepsizes by repeated extrapolation and backtracking. Recently, \cite{applegate2021practical} focused on PDHG for LP and proposed a stepsize-adjusting heuristics by combining the ideas of both balancing and linesearch. Although these stepsize-adjusting strategies \cite{goldstein2013adaptive, goldstein2015adaptive,yokota2017efficient,zdun2021fast} can speed up convergence, they introduce some additional hyperparameters. These new hyperparameters still need to be carefully tuned, and as verified in our numerical experiments, manually tuning them suffers from issues similar to those encountered when tuning the original stepsizes.

The goal of this paper is to develop a variant of PDHG for SDP that is totally tuning-free. We first consider a generic PDHG variant (Algorithm~\ref{algm:pdhg_sdp}) that allows to dynamically adjust its stepsizes in each iteration, and study its convergence (Theorem~\ref{theorem_pdhg4sdp}). Motivated by the connection \cite{o2020equivalence} between PDHG and Douglas-Rachford splitting (DRS) \cite{douglas1956numerical, lions1979splitting}, we treat Algorithm~\ref{algm:pdhg_sdp} as DRS solving a specific class of monotone inclusion problem, and employ the convergence results from \cite{lorenz2019non} to prove its asymptotic convergence. Note that the class of monotone inclusion problem is only assumed to be maximally monotone. Thus, the asymptotic convergence is a fairly standard result, since no inequality from convexity can be exploited to prove a $\mathcal{O}(\frac{1}{k})$ convergence rate, where $k$ is the number of iteration. It is worth noting that although the theoretical results are presented in the context of SDP, they can be easily extended to the setting of $f(x) + g(Ax)$, where $f$ and $g$ are convex, closed, proper, and potentially nonsmooth. As a by-product, we show that various adaptive variants, such as balancing primal-dual residuals \cite{goldstein2013adaptive,goldstein2015adaptive} and aligning local variation of gradient directions \cite{yokota2017efficient,zdun2021fast} are special cases of Algorithm~\ref{algm:pdhg_sdp}, and prove their convergence in an unified manner (Theorem~\ref{theorem_pdhg4sdp_balancing}). For the tuning-free algorithm develpoment, the stepsize-adjusting strategy for DRS proposed in \cite{lorenz2019non} enables us to develop a tuning-free variant of PDHG  (Algorithm~\ref{algm:pdhg_sdp_drs}), which is also a special case of Algorithm~\ref{algm:pdhg_sdp} (Theorem~\ref{theorem_pdhg4sdp_drs}). Finally, we conduct numerical experiments and show that Algorithm~\ref{algm:pdhg_sdp_drs} exhibits compelling performances on both convergence speed and robustness of incorporating low-rank rounding. 

\section{Algorithms and Convergence}
\label{sec:algss-and-conv} 
\subsection{Notation}
Define the resolvent of an operator $A$ to be $J_{A} \coloneqq (\mathbb{I}+A)^{-1}$. Note that if $A = \partial f$, then $J_{\partial f} = \text{Prox}_{f}$. Given $A,B\in \mathbb{R}^{n \times n}$, define operation $\langle A, B\rangle \coloneqq \sum_{i=1}^{n}\sum_{j=1}^{n} A_{ij}B_{ij}$. Define a linear mapping $\mathcal{A} \colon \mathbb{S}^{n\times n} \to \mathbb{R}^m$ as $\mathcal{A}(X) = 
\begin{pmatrix}
\langle A_1, X \rangle \\
\langle A_2, X \rangle \\
\vdots \\
\langle A_m, X \rangle \\
\end{pmatrix}$, where $A_1, A_2,\dots, A_m \in \mathbb{S}^{n\times n}$ are $m$ symmetric matrices. Its adjoint mapping $\mathcal{A}^T \colon \mathbb{R}^m \to \mathbb{S}^{n\times n}$ is defined as $\mathcal{A}^T(y)=\sum_{i=1}^{m}y_i A_i$. SDP aims to minimize a linear objective function of $X \in \mathbb{R}^{n\times n}$ subject to $m$ linear equality constraints, and is defined as
\begin{equation}
\label{prob:primal_sdp}
\begin{aligned}
\min_{X \in \mathbb{R}^{n\times n}} \quad & \langle C, X \rangle\\
\textrm{s.t.} \quad & \mathcal{A}(X) = b \\
  & X \in  \mathbb{S}_{+}^{n\times n}   
\end{aligned}
\end{equation} 
Given $\mathbb{I}_{\mathbb{S}_{+}^{n\times n}}\left(X\right) = 0$ if $X\in \mathbb{S}_{+}^{n\times n}$ and $= \infty$ otherwise, and $\mathbb{I}_{=b}\left(y\right) = 0$ if $y = b$ and $= \infty$ otherwise, \eqref{prob:primal_sdp} can be rewritten as the following nonsmooth problem:
\begin{align}
\label{prob:primal_sdp_nonsmooth}
\min_{X} \quad  \underbrace{\langle C, X \rangle + \mathbb{I}_{\mathbb{S}_{+}^{n\times n}} (X)}_{f\left(\cdot\right)} +  \underbrace{\mathbb{I}_{=b} \left(\mathcal{A}(X)\right)}_{g\left(\cdot\right)}
\end{align}

\subsection{Tuning-Free PDHG for SDP}
We consider a generic variant of  PDHG for SDP (Algorithm~\ref{algm:pdhg_sdp}) that allows to dynamically adjust its stepsizes at each iteration. When $\theta_k$ is fixed to be $1$ for $\forall k$, it is reduced to
\begin{align*}
    & X^{k+1} \leftarrow \text{Proj}_{\mathbb{S}_{+}^{n\times n}}\Big(X^{k} - \alpha_{k} \big(\mathcal{A}^T(y^k)+C\big)\Big) \\
    & y^{k+1} \leftarrow y^k + \beta_k \mathcal{A}\left(2X^{k+1}-X^{k}\right) - \beta_k b,
\end{align*}
which is first presented in \cite{souto2022exploiting} and can be seen as PDHG solving \eqref{prob:primal_sdp_nonsmooth}. We develop Theorem~\ref{theorem_pdhg4sdp} to guide Algorithm~\ref{algm:pdhg_sdp} to adjust its stepsizes at each iteration. The idea comes from the connection \cite{o2020equivalence} between PDHG and DRS. By introducing a new variable $\hat{X}$, this connection allows us to treat PDHG as DRS 
\begin{align*}
    \begin{pmatrix}
        X^{k+1} \\
        \hat{X}^{k+1}
    \end{pmatrix} = J_{\alpha A}\left(2 J_{\alpha B}\left(\begin{pmatrix}
        X^{k} \\
        \hat{X}^{k}
    \end{pmatrix}\right)  - \begin{pmatrix}
        X^{k} \\
        \hat{X}^{k}
    \end{pmatrix}\right) + \begin{pmatrix}
        X^{k} \\
        \hat{X}^{k}
    \end{pmatrix} - J_{\alpha B}\left(\begin{pmatrix}
        X^{k} \\
        \hat{X}^{k}
    \end{pmatrix}\right)
\end{align*}
solving the following monotone inclusion problem:
\begin{align*}
    \text{Find}\ \begin{pmatrix}
        X \\
        \hat{X}
    \end{pmatrix}\quad \text{s.t.}\quad  0 \in  \underbrace{\begin{pmatrix} C \\
    0 \end{pmatrix} + \begin{pmatrix}\partial_{X}\mathbb{I}_{\mathbb{S}_{+}^{n\times n}} (X) \\
    0
    \end{pmatrix} + 
    \begin{pmatrix} 0 \\
    \partial_{\hat{X}}\mathbb{I}_{=0} (\hat{X})
    \end{pmatrix}}_{B = \partial f(X,\hat{X})} + \underbrace{\begin{pmatrix}  \partial_{X}\mathbb{I}_{=b} (\mathcal{A}(X) + \mathcal{T}(\hat{X})) \\ 
    \partial_{\hat{X}}\mathbb{I}_{=b} (\mathcal{A}(X) + \mathcal{T}(\hat{X}))
    \end{pmatrix}}_{A = \partial g(X,\hat{X})},
\end{align*}
where $f(X,\hat{X}) = \langle C, X \rangle + \mathbb{I}_{\mathbb{S}_{+}^{n\times n}} (X) + \mathbb{I}_{=0} (\hat{X})$, $g(X,\hat{X}) = \mathbb{I}_{=b} \left(\mathcal{A}(X) + \mathcal{T}(\hat{X})\right)$, and $\mathcal{T}\colon \mathbb{S}^{n\times n} \to \mathbb{R}^m$ is a certain linear operator that will be specified in the proof. We then can leverage the convergence result of \emph{non-stationary} DRS (Theorem 3.2. in \cite{lorenz2019non}) to develop the convergence theory for Algorithm~\ref{algm:pdhg_sdp}. The details of proof are presented in Appendix.

\begin{algorithm}[H]
\captionsetup{font=small}
\small
\begin{algorithmic}
\STATE \textbf{Input:} $X^0, y^0$
\FOR{$k = 0, 1, 2, \dots $}
\STATE $\alpha_k, \theta_k, \beta_k$ are adjusted following Theorem~\ref{theorem_pdhg4sdp}
\STATE $X^{k+1} \leftarrow \text{Proj}_{\mathbb{S}_{+}^{n\times n}}\Big(X^{k} - \alpha_{k} \big(\mathcal{A}^T(y^k)+C\big)\Big)$
\STATE $y^{k+1} \leftarrow y^k + \beta_k \mathcal{A}(X^{k+1}+\theta_k(X^{k+1}-X^{k})) - \beta_k b$
\ENDFOR
\end{algorithmic}
\caption{\small Generic Adaptive PDHG for SDP} 
\label{algm:pdhg_sdp}
\end{algorithm}

\begin{restatable}{thm-rest}{ThPDHGforSDP}
\label{theorem_pdhg4sdp}
    If the adjustment of $\left\{\big(\alpha_k,\theta_k,\beta_k\big)\right\}_k$ in Algorithm~\ref{algm:pdhg_sdp} follows that 
    \begin{align*}
        \alpha_k \in \left(\alpha_{\min}, \alpha_{\max}\right),\quad \sum_{k=1}^{\infty}\left|\alpha_{k+1}-\alpha_{k}\right|<\infty,\quad \theta_k = \frac{\alpha_k}{\alpha_{k-1}},\quad \alpha_k\beta_k = R,
    \end{align*}
    where $0<\alpha_{\min}\leq\alpha_{\max}<\infty$ and $R< \frac{1}{\lambda_{\max}(\mathcal{A}^T\mathcal{A})}$. Then Algorithm~\ref{algm:pdhg_sdp} weakly converges to $(X^*, y^*)$ such that $0 \in C + \partial_{X}\mathbb{I}_{\mathbb{S}_{+}^{n\times n}} (X^*)  +  \partial_{X}\mathbb{I}_{=b} (\mathcal{A}(X^*))$.
\end{restatable}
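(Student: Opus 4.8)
The plan is to follow the route indicated before the statement: recast the adaptive iteration of Algorithm~\ref{algm:pdhg_sdp} as a \emph{non-stationary} Douglas--Rachford splitting (DRS) on the product space of $(X,\hat X)$, and then invoke the convergence theorem of \cite{lorenz2019non}. First I would record the easy structural facts. The two block operators $B=\partial f$ and $A=\partial g$ are subdifferentials of the closed, convex, proper functions $f(X,\hat X)=\langle C,X\rangle+\mathbb{I}_{\mathbb{S}_{+}^{n\times n}}(X)+\mathbb{I}_{=0}(\hat X)$ and $g(X,\hat X)=\mathbb{I}_{=b}(\mathcal{A}(X)+\mathcal{T}(\hat X))$, so both are maximally monotone, which is one of the standing hypotheses of \cite{lorenz2019non}. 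I would then verify the reduction back to the primal: a zero $z^*=(X^*,\hat X^*)$ of $A+B$ forces $\hat X^*=0$ through the $\hat X$-block of $B$, and the $X$-block collapses to $0\in C+\partial_X\mathbb{I}_{\mathbb{S}_{+}^{n\times n}}(X^*)+\partial_X\mathbb{I}_{=b}(\mathcal{A}(X^*))$, with the dual variable $y^*$ identified as the residual multiplier of the affine constraint. Hence, once weak convergence of the DRS shadow sequence is established, the advertised limit follows at once.

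The crux is the exact identification of the algorithm with DRS, and this is where I expect the main obstacle to lie. I would compute the two resolvents explicitly: since $A$ is a normal-cone operator, $J_{\alpha_k A}$ is the (parameter-independent) projection onto $\{\mathcal{A}(X)+\mathcal{T}(\hat X)=b\}$, while $J_{\alpha_k B}$ reduces in the $X$-block to the shifted projection $\text{Proj}_{\mathbb{S}_{+}^{n\times n}}(\,\cdot\,-\alpha_k C)$ and in the $\hat X$-block to projection onto $\{\hat X=0\}$. The operator $\mathcal{T}$ is to be chosen, as in \cite{o2020equivalence}, precisely so that the affine projection has a closed form that reproduces the term $\alpha_k\mathcal{A}^T(y^k)$ in the primal step and the update $y^{k}+\beta_k\mathcal{A}(X^{k+1}+\theta_k(X^{k+1}-X^k))-\beta_k b$ in the dual step. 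The decisive bookkeeping is the treatment of the varying stepsize: $\alpha_k$ plays the role of the non-stationary resolvent parameter, while the fixed product $\alpha_k\beta_k=R$ pins down a \emph{single} preconditioning metric $M$ on the product space; the extrapolation weight $\theta_k=\alpha_k/\alpha_{k-1}$ is exactly what re-expresses the previous iterate across the change of resolvent parameter, so that the PDHG dual step coincides with the reflected-resolvent step of DRS. Getting this identity right — including checking that $R<1/\lambda_{\max}(\mathcal{A}^{T}\mathcal{A})$ is what makes $M$ positive definite (the PDHG condition $R\,\lambda_{\max}(\mathcal{A}^T\mathcal{A})<1$ in disguise) — is the heart of the argument.

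With the non-stationary DRS representation in hand, it remains to check the hypotheses of Theorem~3.2 of \cite{lorenz2019non} and read off the conclusion. Maximal monotonicity of $A$ and $B$ was already established; positive definiteness of the fixed metric $M$ follows from $R<1/\lambda_{\max}(\mathcal{A}^{T}\mathcal{A})$; and the two conditions imposed on the resolvent parameter are met directly by the assumptions, namely boundedness $\alpha_k\in(\alpha_{\min},\alpha_{\max})$ with $0<\alpha_{\min}\le\alpha_{\max}<\infty$ and summable variation $\sum_k|\alpha_{k+1}-\alpha_k|<\infty$, which is precisely the summable-variation hypothesis on the stepsize sequence required there. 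Under these conditions the governing averaged (firmly nonexpansive) operators keep a common modulus, so the Krasnoselskii--Mann-type analysis of \cite{lorenz2019non} applies and yields weak convergence of the shadow sequence to some zero $z^*=(X^*,0)$ of $A+B$. By the reduction of the first paragraph this gives weak convergence of $(X^k,y^k)$ to a pair $(X^*,y^*)$ satisfying $0\in C+\partial_X\mathbb{I}_{\mathbb{S}_{+}^{n\times n}}(X^*)+\partial_X\mathbb{I}_{=b}(\mathcal{A}(X^*))$, as claimed.
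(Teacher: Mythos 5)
Your proposal follows essentially the same route as the paper's proof: lift to the product space $(X,\hat X)$, choose $\mathcal{T}$ (as in the paper's Lemma~\ref{lemma1}, using $R<1/\lambda_{\max}(\mathcal{A}^T\mathcal{A})$ to guarantee $\tfrac{1}{R}\mathcal{I}-\mathcal{A}\mathcal{A}^T\succ 0$) so that the resolvent computations reproduce the PDHG primal and dual steps with $\theta_k=\alpha_k/\alpha_{k-1}$ and $\alpha_k\beta_k=R$, and then invoke Theorem~3.2 of \cite{lorenz2019non} under the boundedness and summable-variation hypotheses on $\{\alpha_k\}$. The identification of the key obstacles (the exact DRS bookkeeping and the role of the stepsize product) matches the paper's argument, so the plan is correct and not a genuinely different approach.
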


Theorem~\ref{theorem_pdhg4sdp} provides an unified approach to analyze the convergence of PDHG with adaptive stepsizes. We show that various existing variants, such as Algorithm~\ref{algm:pdhg_sdp_pdb}  and \ref{algm:pdhg_sdp_aa}, are special cases of Algorithm~\ref{algm:pdhg_sdp}, and thus justify their convergence (Theorem~\ref{theorem_pdhg4sdp_balancing}).

\begin{algorithm}[H]
\captionsetup{font=small}
\small
\begin{algorithmic}
\STATE \textbf{Input:} $X^0 \in \mathbb{R}^{n\times n}, y^0 \in \mathbb{R}^m, \alpha_0\beta_0 < \frac{1}{\lambda_{\max}(\mathcal{A}^T\mathcal{A})}, (\epsilon_0,\eta)\in(0,1)^2$
\FOR{$k = 0, 1, 2, \dots $}
\STATE $X^{k+1} = \text{Proj}_{\mathcal{S}^{n}_{+}}\Big(X^k-\alpha_k\big(\mathcal{A}^T(y^k)- C\big)\Big)$
\STATE $y^{k+1} = y^k + \beta_k \bigg(\mathcal{A}\Big(X^{k+1} + \theta_{k}\big(X^{k+1} - X^k\big)\Big) - b\bigg)$
\STATE $p^{k+1} = \norm{\frac{X^k-X^{k+1}}{\alpha_k}-\mathcal{A}^T(y^k-y^{k+1})}$
\STATE $d^{k+1} = \norm{\frac{y^k-y^{k+1}}{\beta_k} - \mathcal{A}\big(X^k-X^{k+1}\big)}$
\IF{$p^{k+1} > 2 d^{k+1} \Delta$}
    \STATE $\alpha_{k+1} = \frac{\alpha_k}{1-\epsilon_k},\ \beta_{k+1} = \beta_{k}(1-\epsilon_k),\ \theta_{k+1} = \frac{1}{1-\epsilon_{k}}$
  \ELSIF{$\frac{1}{2} d^{k+1} \leq p^{k+1} \leq 2 d^{k+1}$}
    \STATE $\alpha_{k+1} = \alpha_k,\ \beta_{k+1} = \beta_{k},\ \theta_{k+1} = 1$
  \ELSE
    \STATE $\alpha_{k+1} = \alpha_k(1-\epsilon_k),\ \beta_{k+1} = \frac{\beta_{k}}{1-\epsilon_k},\ \theta_{k+1} = 1-\epsilon_{k}$
  \ENDIF
\STATE $\epsilon_{k+1} = \epsilon_k \eta$
\ENDFOR
\end{algorithmic}
\caption{\small Balancing Primal and Dual Residuals (B-PDR) \cite{goldstein2013adaptive,goldstein2015adaptive}}
\label{algm:pdhg_sdp_pdb}
\end{algorithm}

\begin{algorithm}[H]
\captionsetup{font=small}
\small
\begin{algorithmic}
\STATE \textbf{Input:} $X^0 \in \mathbb{R}^{n\times n}, y^0 \in \mathbb{R}^m, \alpha_0\beta_0 < \frac{1}{\lambda_{\max}(\mathcal{A}^T\mathcal{A})}, (\epsilon_0,\eta)\in(0,1)^2$
\FOR{$k = 0, 1, 2, \dots $}
\STATE $X^{k+1} = \text{Proj}_{\mathcal{S}^{n}_{+}}\Big(X^k-\alpha_k\big(\mathcal{A}^T(y^k)- C\big)\Big)$
\STATE $y^{k+1} = y^k + \beta_k \bigg(\mathcal{A}\Big(X^{k+1} + \theta_{k}\big(X^{k+1} - X^k\big)\Big) - b\bigg)$
\STATE $p^{k+1} = \frac{1}{\alpha_k}(X^k-X^{k+1})-\mathcal{A}^T(y^k-y^{k+1})$ or $\frac{1}{\alpha_k}(X^k-X^{k+1})-\mathcal{A}^T (y^k)$
\STATE $w_{p}^{k+1} = \frac{\langle X^k - X^{k+1}, p^{k+1}\rangle}{\norm{X^{k}-X^{k+1}}_{\mathcal{F}}\norm{p^{k+1}}_{\mathcal{F}}}$ 
\IF{$w^{k+1}_{p} > 0.99 $}
    \STATE $\alpha_{k+1} = \frac{\alpha_k}{1-\epsilon_k},\ \beta_{k+1} = \beta_{k}(1-\epsilon_k),\ \theta_{k+1} = \frac{1}{1-\epsilon_{k}}$
  \ELSIF{$0 \leq w^{k+1}_{p} \leq 0.99$}
    \STATE $\alpha_{k+1} = \alpha_k,\ \beta_{k+1} = \beta_{k},\ \theta_{k+1} = 1$
  \ELSE
    \STATE $\alpha_{k+1} = \alpha_k(1-\epsilon_k),\ \beta_{k+1} = \frac{\beta_{k}}{1-\epsilon_k},\ \theta_{k+1} = 1-\epsilon_{k}$
  \ENDIF
\STATE $\epsilon_{k+1} = \epsilon_k \eta$
\ENDFOR
\end{algorithmic}
\caption{\small Aligning Local Variation of Gradient Directions (A-LV) \cite{yokota2017efficient,zdun2021fast}}
\label{algm:pdhg_sdp_aa}
\end{algorithm}

\begin{algorithm}[H]
\captionsetup{font=small}
\small
\begin{algorithmic}
\STATE \textbf{Input:} $X^0,y^1,\alpha_0>0,s\in\mathbb{R}$
\FOR{$k = 1, 2, \dots $}
\STATE $X^{k} \leftarrow \text{Proj}_{\mathbb{S}_{+}^{n\times n}}\Big(X^{k-1} - \alpha_{k-1} \big(\mathcal{A}^T(y^k)+C\big)\Big)$
\STATE $\alpha_{k} \in [\alpha_{k-1},\alpha_{k-1}\sqrt{1+\theta_{k-1}}]$, $\beta_k = s\alpha_k$ and $\theta_k \leftarrow \frac{\alpha_k}{\alpha_{k-1}}$
\STATE $y^{k+1} \leftarrow y^k + \beta_k\big( \mathcal{A}(X^{k}-\theta_k(X^{k}-X^{k-1})) - b\big)$
\WHILE{$\norm{\mathcal{A}^T(y^{k+1}) - \mathcal{A}^T(y^k)} > \frac{1}{\sqrt{s}\alpha_{k}}\norm{y^{k+1}-y^k}$}
\STATE $\alpha_k \leftarrow \alpha_k\mu$, $\beta_k = s\alpha_k$ and $\theta_k \leftarrow \frac{\alpha_k}{\alpha_{k-1}}$
\STATE $y^{k+1} \leftarrow y^k + \beta_k\big( \mathcal{A}(X^{k}-\theta_k(X^{k}-X^{k-1})) - b\big)$
\ENDWHILE
\ENDFOR
\end{algorithmic}
\caption{\small Linesearch (LS) \cite{malitsky2018first}}
\label{algm:pdhg_sdp_line_search}
\end{algorithm}

\begin{restatable}{thm-rest}{ThPDHGforSDPB}
\label{theorem_pdhg4sdp_balancing}
    Algorithm~\ref{algm:pdhg_sdp_pdb} and \ref{algm:pdhg_sdp_aa} converge to $(X^*, y^*)$ such that $0 \in C + \partial_{X}\mathbb{I}_{\mathbb{S}_{+}^{n\times n}} (X^*)  +  \partial_{X}\mathbb{I}_{=b} (\mathcal{A}(X^*))$.
\end{restatable}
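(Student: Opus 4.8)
The plan is to show that both Algorithm~\ref{algm:pdhg_sdp_pdb} (B-PDR) and Algorithm~\ref{algm:pdhg_sdp_aa} (A-LV) are instances of the generic scheme in Algorithm~\ref{algm:pdhg_sdp} whose stepsize sequences $\{(\alpha_k,\theta_k,\beta_k)\}_k$ satisfy the four hypotheses of Theorem~\ref{theorem_pdhg4sdp}; the stated convergence then follows immediately. First I would observe that the $X$- and $y$-updates in both algorithms are of the same primal-projection / dual-extrapolation form as those of Algorithm~\ref{algm:pdhg_sdp}, with the $\theta_{k+1}$ assigned at the end of iteration $k$ being exactly the extrapolation parameter consumed at iteration $k+1$, so it suffices to verify the stepsize conditions. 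The structural point that lets me treat the two algorithms simultaneously is that the \emph{branch-selection rule} --- the primal/dual residual comparison $(p^{k+1},d^{k+1})$ in B-PDR versus the cosine-similarity test on $w_p^{k+1}$ in A-LV --- never enters the verification: all three branches in both algorithms apply the \emph{same} multiplicative updates to the triple $(\alpha_k,\theta_k,\beta_k)$, so the decision thresholds are immaterial.

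Next I would check the two algebraic invariants. For the reciprocity $\alpha_k\beta_k = R$, note that in every branch the factor multiplying $\alpha_k$ and the factor multiplying $\beta_k$ are reciprocal ($\tfrac{1}{1-\epsilon_k}$ and $1-\epsilon_k$, or $1$ and $1$), so $\alpha_{k+1}\beta_{k+1} = \alpha_k\beta_k$ holds in each case; by induction $\alpha_k\beta_k \equiv \alpha_0\beta_0$, which we take as the constant $R < 1/\lambda_{\max}(\mathcal{A}^T\mathcal{A})$ by the input assumption. Likewise, in every branch the newly assigned $\theta_{k+1}$ equals exactly $\alpha_{k+1}/\alpha_k$ (reading off $\tfrac{1}{1-\epsilon_k}$, $1$, and $1-\epsilon_k$ in the three cases), so the relation $\theta_k = \alpha_k/\alpha_{k-1}$ holds throughout. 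These two conditions are therefore automatic and independent of which branch fires.

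The only substantive work, and the step I expect to be the main obstacle, is establishing that $\alpha_k$ stays in a compact interval $(\alpha_{\min},\alpha_{\max}) \subset (0,\infty)$ and that $\sum_k|\alpha_{k+1}-\alpha_k| < \infty$, despite the branch choices being data-dependent and a priori unknown. Here I would exploit that $\epsilon_k = \epsilon_0\eta^k$ decays geometrically with $\epsilon_0,\eta \in (0,1)$. Passing to logarithms, each branch changes $\ln\alpha_k$ by $0$ or $\pm\ln\tfrac{1}{1-\epsilon_k}$, and since $\epsilon_k \le \epsilon_0 < 1$ we have $|\ln\tfrac{1}{1-\epsilon_k}| \le \tfrac{\epsilon_k}{1-\epsilon_0}$; summing the geometric series gives $\sum_k|\ln\alpha_{k+1}-\ln\alpha_k| \le \tfrac{1}{1-\epsilon_0}\sum_k\epsilon_k = \tfrac{\epsilon_0}{(1-\epsilon_0)(1-\eta)} < \infty$, uniformly over all admissible branch sequences. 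Consequently $\ln\alpha_k$ is Cauchy and converges to a finite limit, so $\alpha_k$ is trapped in some $(\alpha_{\min},\alpha_{\max})$ with $0 < \alpha_{\min} \le \alpha_{\max} < \infty$, yielding the first hypothesis. Finally $|\alpha_{k+1}-\alpha_k| \le \alpha_{\max}\big|e^{\pm\ln\frac{1}{1-\epsilon_k}}-1\big| \le \alpha_{\max}\,\tfrac{\epsilon_k}{1-\epsilon_0}$, and summing this geometric bound gives the summable-variation hypothesis. With all four conditions of Theorem~\ref{theorem_pdhg4sdp} met regardless of the branches taken, the iterates of both algorithms weakly converge to a point satisfying $0 \in C + \partial_{X}\mathbb{I}_{\mathbb{S}_{+}^{n\times n}}(X^*) + \partial_{X}\mathbb{I}_{=b}(\mathcal{A}(X^*))$.
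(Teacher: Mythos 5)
Your proposal is correct and follows essentially the same route as the paper: both reduce the claim to the hypotheses of Theorem~\ref{theorem_pdhg4sdp}, observe that $\theta_k=\alpha_k/\alpha_{k-1}$ and $\alpha_k\beta_k=R$ hold by construction in every branch, and exploit the geometric decay of $\epsilon_k=\epsilon_0\eta^k$ to obtain boundedness of $\alpha_k$ and summability of $|\alpha_{k+1}-\alpha_k|$. Your log-scale total-variation bound, being uniform over all (data-dependent) branch sequences, is in fact slightly cleaner than the paper's argument, which assumes ``without loss of generality'' that $\alpha_k$ increases at every step and then invokes a ratio test.
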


It is worth noting that Algorithm~\ref{algm:pdhg_sdp_pdb} and \ref{algm:pdhg_sdp_aa}, as well as the linesearch strategy (Algorithm~\ref{algm:pdhg_sdp_line_search}), enable adaptive stepsize adjustment, but they are not tuning-free. They introduce some additional hyperparameters that  still need to be carefully tuned. As verified in our numerical experiments, the hyperparameters  that lead to
fast convergence in one scenario may not be a good choice for another. And manually tuning them is a tedious and disturbing process. To address this challenge, we develop a stepsize-adjusting strategy for PDHG (Algorithm~\ref{algm:pdhg_sdp_drs}) that is totally tuning-free. This strategy is originally designed for DRS \cite{lorenz2019non}. We adapt it to the PDHG scenario based on the connection \cite{o2020equivalence} between PDHG and DRS. We prove its convergence by showing that it is a special case of Algorithm~\ref{algm:pdhg_sdp} (Theorem~\ref{theorem_pdhg4sdp_drs}).

\begin{algorithm}[H]
\captionsetup{font=small}
\small
\begin{algorithmic}
\STATE \textbf{Input:} $X^0,y^1,\epsilon \geq \lambda_{\max}(\mathcal{A}^T\mathcal{A}),\theta_{\min} = 10^{-5}, \theta_{\max} = 10^{5}, \alpha_0 =1, \{\omega_k\}_{k } = \{2^{-\frac{k}{100}}\}_k$.
\FOR{$k = 1, 2, \dots $}
\STATE $X^{k} \leftarrow \text{Proj}_{\mathbb{S}_{+}^{n\times n}}\Big(X^{k-1} - \alpha_{k-1} \big(\mathcal{A}^T(y^k)+C\big)\Big)$
\STATE $\theta_k \leftarrow \text{Proj}_{[\theta_{\min},\theta_{\max}]}\Big(\frac{\norm{
        X^{k} }}{\norm{
        X^{k} - X^{k-1} + \alpha_{k-1}\mathcal{A}^T(y^k)}} \Big)$, $\alpha_k \leftarrow (1-\omega_k + \omega_k\theta_k)\alpha_{k-1}$, and $\beta_k \leftarrow \frac{1}{\epsilon\alpha_k}$
\STATE $y^{k+1} \leftarrow y^k + \beta_k \mathcal{A}(X^{k}+\theta_k(X^{k}-X^{k-1})) - \beta_k b$
\ENDFOR
\end{algorithmic}
\caption{\small Tuning-Free PDHG for SDP} 
\label{algm:pdhg_sdp_drs}
\end{algorithm}

\begin{restatable}{thm-rest}{ThPDHGforSDPDRS}
\label{theorem_pdhg4sdp_drs}
    Algorithm~\ref{algm:pdhg_sdp_drs} converges to $(X^*, y^*)$ such that $0 \in C + \partial_{X}\mathbb{I}_{\mathbb{S}_{+}^{n\times n}} (X^*)  +  \partial_{X}\mathbb{I}_{=b} (\mathcal{A}(X^*))$.
\end{restatable}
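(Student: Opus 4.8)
The plan is to prove Theorem~\ref{theorem_pdhg4sdp_drs} by showing that Algorithm~\ref{algm:pdhg_sdp_drs} is an instance of the generic scheme of Algorithm~\ref{algm:pdhg_sdp}, and then invoking Theorem~\ref{theorem_pdhg4sdp}. Concretely, it suffices to exhibit a correspondence between the iterates and stepsize sequences of the two algorithms and to verify that the sequence $\{(\alpha_k,\theta_k,\beta_k)\}_k$ produced by Algorithm~\ref{algm:pdhg_sdp_drs} satisfies the four hypotheses of Theorem~\ref{theorem_pdhg4sdp}: boundedness $\alpha_k\in(\alpha_{\min},\alpha_{\max})$, summable increments $\sum_k|\alpha_{k+1}-\alpha_k|<\infty$, the ratio condition $\theta_k=\alpha_k/\alpha_{k-1}$, and the constant-product condition $\alpha_k\beta_k=R$ with $R<1/\lambda_{\max}(\mathcal{A}^T\mathcal{A})$.

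First I would set up the correspondence. The two loops are indexed differently -- Algorithm~\ref{algm:pdhg_sdp} computes $X^{k+1}$ from $(X^k,y^k)$ and then $y^{k+1}$, whereas Algorithm~\ref{algm:pdhg_sdp_drs} computes $X^k$ from $(X^{k-1},y^k)$ and then $y^{k+1}$ -- so the first task is to align the indices by a shift and match the projection, extrapolation, and dual steps term by term. The only genuinely delicate point here is to confirm that the extrapolation coefficient appearing in the dual update of Algorithm~\ref{algm:pdhg_sdp_drs} is exactly the stepsize ratio required by Theorem~\ref{theorem_pdhg4sdp}; that is, under the identification the effective over-relaxation equals $\alpha_k/\alpha_{k-1}$, which by construction is $1-\omega_k+\omega_k\theta_k$. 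This is the step I expect to be the main obstacle, since it requires care to separate the raw projected ratio (used to \emph{define} the new stepsize) from the over-relaxation that actually multiplies $X^k-X^{k-1}$, and to keep the dual-stepsize index consistent so that the product $\alpha_k\beta_k$ is formed from matching indices.

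Given the correspondence, the constant-product condition is immediate: since $\beta_k=1/(\epsilon\alpha_k)$ we get $\alpha_k\beta_k=1/\epsilon =: R$, and the input requirement $\epsilon\ge\lambda_{\max}(\mathcal{A}^T\mathcal{A})$ yields $R\le 1/\lambda_{\max}(\mathcal{A}^T\mathcal{A})$; I would take $\epsilon$ strictly larger than $\lambda_{\max}(\mathcal{A}^T\mathcal{A})$ to obtain the strict inequality demanded by Theorem~\ref{theorem_pdhg4sdp}, or otherwise treat the boundary case separately. The ratio condition $\theta_k=\alpha_k/\alpha_{k-1}$ then holds by the definition $\alpha_k=(1-\omega_k+\omega_k\theta_k)\alpha_{k-1}$ once the correspondence of the preceding paragraph is in place.

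It remains to verify boundedness and summability, which is where the design choices $\theta_k\in[\theta_{\min},\theta_{\max}]$ and $\omega_k=2^{-k/100}$ do the work. Writing $\alpha_k=\alpha_0\prod_{j=1}^{k}\big(1+\omega_j(\theta_j-1)\big)$, I would bound $\big|\log\big(1+\omega_j(\theta_j-1)\big)\big|\le C\,\omega_j$ for all large $j$, using $\theta_j-1\in[\theta_{\min}-1,\theta_{\max}-1]$ together with $\omega_j\to 0$; since $\sum_j\omega_j=\sum_j 2^{-j/100}<\infty$ is a convergent geometric series, the partial sums of $\log\alpha_k$ converge, so $\alpha_k$ tends to a finite positive limit and is therefore bounded away from $0$ and $\infty$, giving $\alpha_{\min},\alpha_{\max}$. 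Summability then follows from $|\alpha_k-\alpha_{k-1}|=\alpha_{k-1}\,\omega_k\,|\theta_k-1|\le \alpha_{\max}(\theta_{\max}-1)\,\omega_k$ and $\sum_k\omega_k<\infty$. With all four hypotheses verified, Theorem~\ref{theorem_pdhg4sdp} applies and yields weak convergence of $(X^k,y^k)$ to a point $(X^*,y^*)$ with $0\in C+\partial_X\mathbb{I}_{\mathbb{S}_{+}^{n\times n}}(X^*)+\partial_X\mathbb{I}_{=b}(\mathcal{A}(X^*))$, which is the claim.
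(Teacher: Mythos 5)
Your proposal follows essentially the same route as the paper: cast Algorithm~\ref{algm:pdhg_sdp_drs} as an instance of the generic adaptive scheme (equivalently, of non-stationary DRS via Theorem~\ref{theorem_drs}) and verify the stepsize hypotheses from $\theta_k\in[\theta_{\min},\theta_{\max}]$ and $\sum_k\omega_k<\infty$; your verification of boundedness and summability through $\log\alpha_k=\sum_j\log\bigl(1+\omega_j(\theta_j-1)\bigr)$ is in fact slightly more careful than the paper's, whose displayed bound $\prod_{i=1}^{k}(1-\omega_i+\omega_i\theta_i)\alpha_0\le\theta_{\max}\alpha_0$ does not follow from each factor being at most $\theta_{\max}$. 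The ``delicate point'' you flag is real and unresolved on both sides: as printed, the dual update of Algorithm~\ref{algm:pdhg_sdp_drs} extrapolates with $\theta_k$ while the stepsize ratio is $\alpha_k/\alpha_{k-1}=1-\omega_k+\omega_k\theta_k$, so the hypothesis $\theta_k=\alpha_k/\alpha_{k-1}$ of Theorem~\ref{theorem_pdhg4sdp} is not literally met, and the paper's proof (which likewise does not address that $\epsilon\ge\lambda_{\max}(\mathcal{A}^T\mathcal{A})$ only gives $R\le 1/\lambda_{\max}(\mathcal{A}^T\mathcal{A})$ rather than the required strict inequality) passes over this mismatch without comment.
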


\section{Numerical Experiments}
\label{sec:exp}
We compare the performances of Algorithm~\ref{algm:pdhg_sdp_drs} with three existing adaptive alternatives: balancing primal dual residuals (B-PDR, Algorithm~\ref{algm:pdhg_sdp_pdb}) \cite{goldstein2013adaptive}, aligning local variation of gradient directions (A-LV, Algorithm~\ref{algm:pdhg_sdp_aa}) \cite{yokota2017efficient}, and linesearch (LS, Algorithm~\ref{algm:pdhg_sdp_line_search}) \cite{malitsky2018first}. We test them on: 
\begin{enumerate}
    \item \textbf{Random generation (RG)}: $C,\mathcal{A},b$ of \eqref{prob:primal_sdp} is randomly generated, while feasibility is ensured. In the simulations, we set $m = 50$ and $n = 50$.
    \item \textbf{Maximum cut (MC)}: Given an undircted graph $G = (V,E)$ comprising a vertex set $V = \{1,\dots,n\}$ and a set $E$ of $m$ edges, an SDP relaxation for solving maximum cut is
    \begin{align*}
        \min_{x} &\quad \langle L, X\rangle \\
        \text{s.t.} &\quad 
                    \text{diag}(X) = 1 \\
                    &\quad X \succeq 0.
    \end{align*}
    where $L \coloneqq \sum_{\{i,j\}\in E}(e_i-e_j)(e_i-e_j)^T \in \mathbb{R}^{n\times n}$. In the simulations, we set $m = 100$ and $n = 100$.
    \item \textbf{Sensor network localization (SNL)}:
Given anchors $\{a_1,a_2,\dots,a_{m}\} \subset \mathbb{R}^p$, distance information $d_{ij},\ (i,j)\in \mathcal{N}_x \coloneqq \Big\{(i,j): i<j,\ d_{ij}\ \text{is specified}\Big\}$, and $\hat{d}_{kj}, \ (k,j)\in \mathcal{N}_a \coloneqq \Big\{(k,j): \hat{d}_{kj}\ \text{is specified}\Big\}$, sensor network localization finds $\{x_{1},x_{2},\dots,x_{n}\} \subset \mathbb{R}^p$ for all $i$ such that
\begin{align*}
    & \norm{x_i-x_j}^2 = d_{ij}^2,\quad\forall\ (i,j)\in \mathcal{N}_x, \\
    & \norm{a_k-x_j}^2 = \hat{d}_{kj}^2,\quad\forall\ (k,j)\in \mathcal{N}_a.
\end{align*}
Let $X = \big[x_1\ x_2\ \cdots\ x_n\big]\in\mathbb{R}^{p\times n}$. Then an SDP relaxation for solving this problem is finding a symmetric matrix $Z\in \mathbb{S}^{(p+n)\times (p+n)}$ such that
\begin{align*}
    \min_{Z} &\quad 0 \\
    \text{s.t.} &\quad 
                (e_i-e_j)(e_i-e_j)^T\bullet Y = d_{ij}^2,\ \forall (i,j) \in \mathcal{N}_x\\
                &\quad \begin{pmatrix}
                    a_k \\
                    -e_j
                \end{pmatrix}{\begin{pmatrix}
                    a_k \\
                    -e_j
                \end{pmatrix}}^T\bullet \begin{pmatrix}
                    I_{p \times p} & X \\
                    X^T & Y 
                \end{pmatrix} = \hat{d}_{kj}^2,\ \forall (k,j) \in \mathcal{N}_a \\
                &\quad Z \coloneqq \begin{pmatrix}
                    I_{p \times p} & X \\
                    X^T & Y 
                \end{pmatrix} \succeq 0.
\end{align*}
In the simulations, we set $m=10, n = 50, \texttt{radius} = 0.3, \texttt{degree} = 5$. Different from random generation and maximum cut, here $m$ refers to the number of anchors and $n$ refers to the total number of sensors.
\end{enumerate}

\subsection{Settings and parameter configurations}
The computational bottleneck of applying PDHG to solve large-scale SDP is the need for full eigenvalue decomposition at each iteration when projecting onto the positive semidefinite cone. To address this, practitioners usually adopt a rounding heuristics, updating the primal variables $X^k$ by an approximate projection onto $\mathbb{S}_{+}^{n\times n}$:
$$
X^{k} \leftarrow \text{A-Proj}^{r}_{\mathbb{S}_{+}^{n\times n}}\Big(\underbrace{X^{k-1} - \alpha_{k-1} \big(\mathcal{A}^T(y^k)+C\big)}_{\hat{X}^{k-1}}\Big) = \sum_{i=1}^{r} \max\{0,\lambda_i\}u_i u_i^T \in \mathbb{S}_{+}^{n\times n},
$$
where $\lambda_1 \geq \lambda_2 \geq \dots \geq \lambda_r$ are the first $r$ eigenvalues of $\hat{X}^{k-1}$, and $u_1,u_2,\dots,u_r$ are the associated eigenvectors. This approximation can be efficiently computed by iterative algorithms, such as the Lanczos algorithm, which only requires matrix-vector products. We set $r$ to be $\log(n)$ in the simulations.

For B-PDR (Algorithm~\ref{algm:pdhg_sdp_pdb}) and A-LV (Algorithm~\ref{algm:pdhg_sdp_aa}), we set $\epsilon_0 = 0.5$ and perform a grid search on $\eta \in \left \{0.9,0.925,0.95,0.975,0.99\right\}$. We test each set of configurations on $33$ random generation, $34$ maximum cut, and $33$ sensor network localization with different random seeds, resulting in $100$ different structures of $C,\mathcal{A},b$ (See the code for details of the generation). $\eta$ is finally set to be $0.95$, as B-PDR and A-LV with such configuration solves $86\%$ and $83\%$ of the simulated problems with the fastest convergence speed. For LS (Algorithm~\ref{algm:pdhg_sdp_line_search}), we set $s \in \{\frac{1}{10},\frac{1}{5},1,10\}$.

For the stopping criteria, the algorithms are terminated once $\norm{p^k}^2 + \norm{d^k}^2 < {10}^{-6}$, where
\begin{align*}
    & p^{k+1} = \frac{1}{\alpha_k}(x^k-x^{k+1})-A^T(y^k-y^{k+1}) \in \partial f(x^{k+1}) + A^Ty^{k+1} = p(x^{k+1},y^{k+1}), \\
    & d^{k+1} = \frac{1}{\beta_k}(y^k-y^{k+1})-A(x^k-x^{k+1}) \in \partial g(y^{k+1}) + Ax^{k+1} = d(x^{k+1},y^{k+1}).
\end{align*}

\subsection{Results and discussion}
For each problem in \{RG, MC, SNL\}, we run $100$ times with different random seeds, resulting in $100$ different structures of $C,\mathcal{A},b$. Table~\ref{tab:random} records the percent of the solved problems within a certain number of iterations for each algorithm. For example, the entry $\left(2,2\right)$ in Table~\ref{tab:random} means the proposed tuning-free algorithm solves $38\%$ of the simulated RG problems within $5000$ iterations. Figure~\ref{fig:random} plots the $\left\{\log(\norm{p^k}^2 + \norm{d^k}^2) -\texttt{iteration}\right\}$-curve with respect to $\texttt{rng(1)}, \texttt{rng(2)}, \texttt{rng(3)}, \texttt{rng(4)}$. 

The proposed tuning-free algorithm shows superior convergence speed and robustness, outperforming other baselines. Moreover, linesearch does not have a one-size-fit-all choice of $s$. Indeed, its performances highly depends on problems and settings. For example, in the second row of Figure~\ref{fig:random}, linesearch with $s=\frac{1}{5}$ achieves the fastest convergence speed for $\texttt{rng(3)}$, achieves a mediocre convergence speed for $\texttt{rng(1)}$, and diverges in the rest.

\begin{figure}[!htb]
\minipage{0.25\textwidth}
  \includegraphics[width=\linewidth]{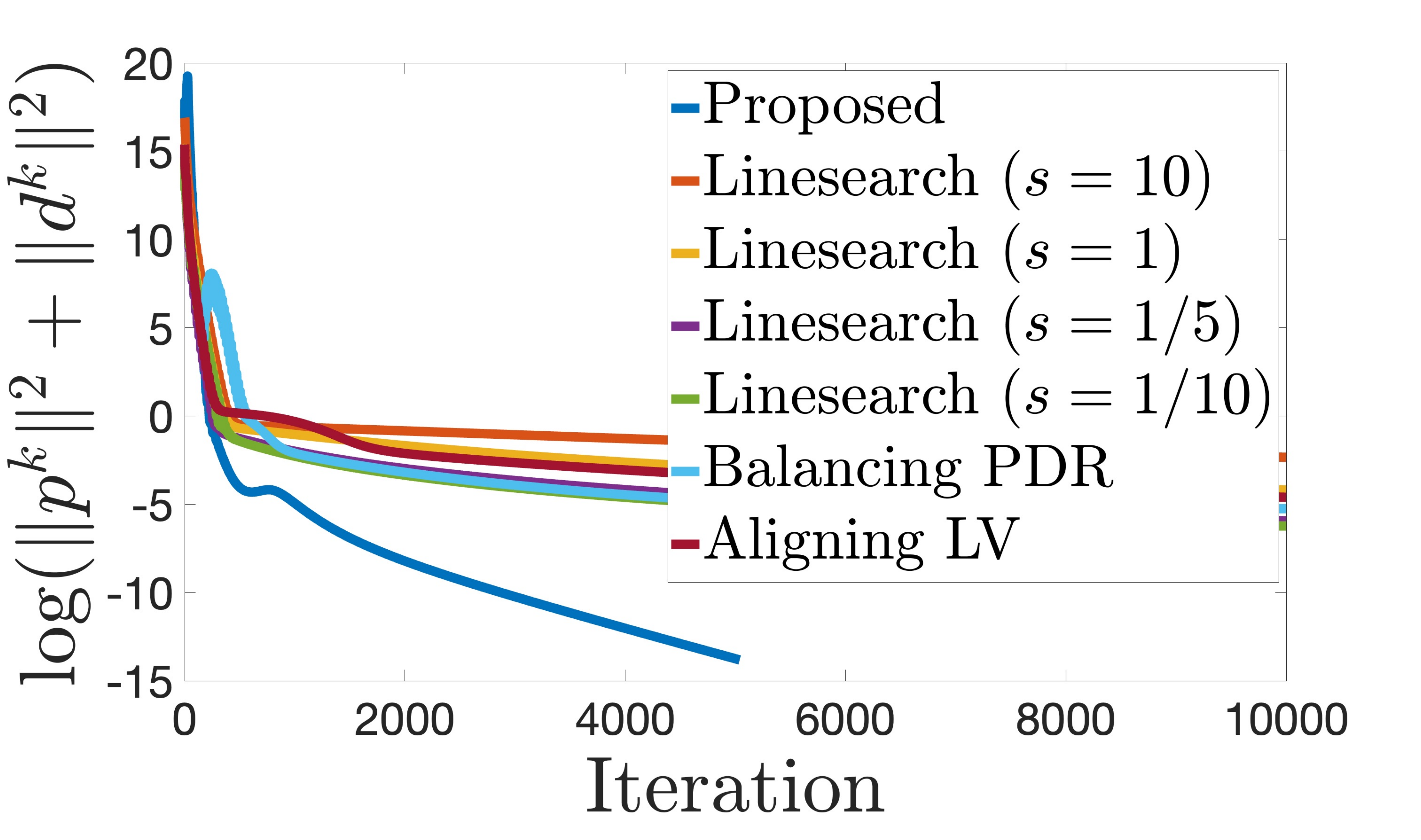}
\endminipage\hfill
\minipage{0.25\textwidth}
  \includegraphics[width=\linewidth]{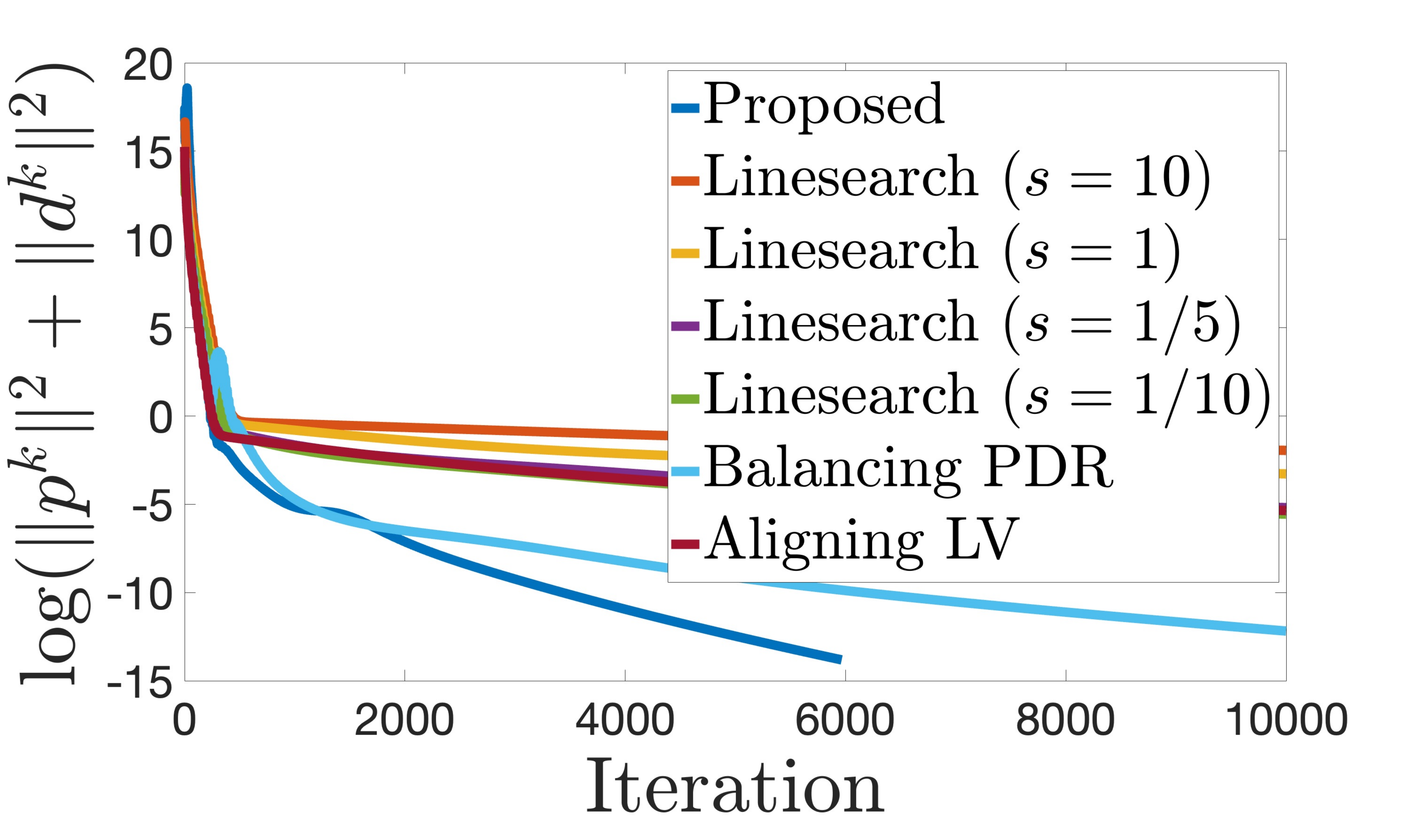}
\endminipage\hfill
\minipage{0.25\textwidth}
  \includegraphics[width=\linewidth]{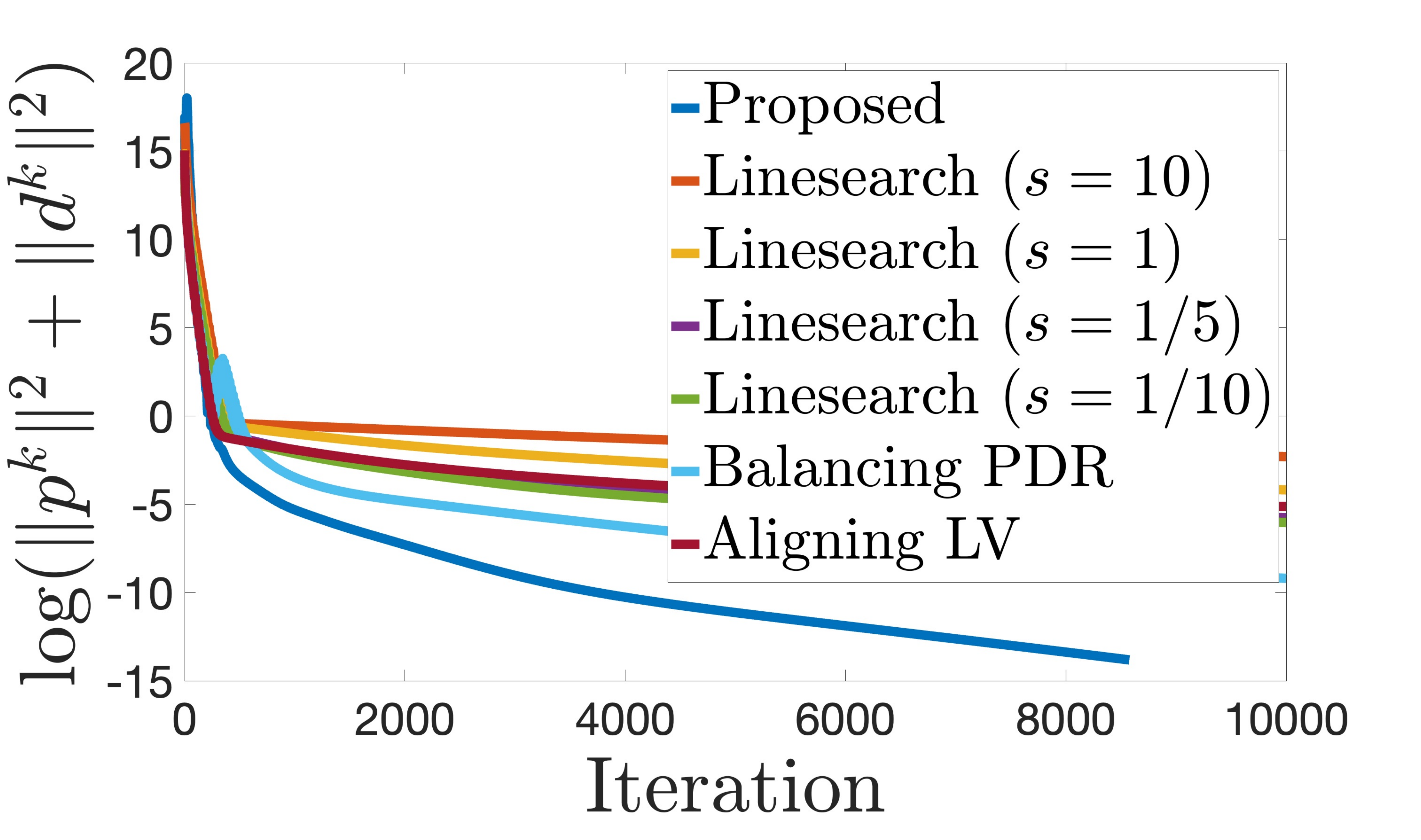}
\endminipage\hfill
\minipage{0.25\textwidth}
  \includegraphics[width=\linewidth]{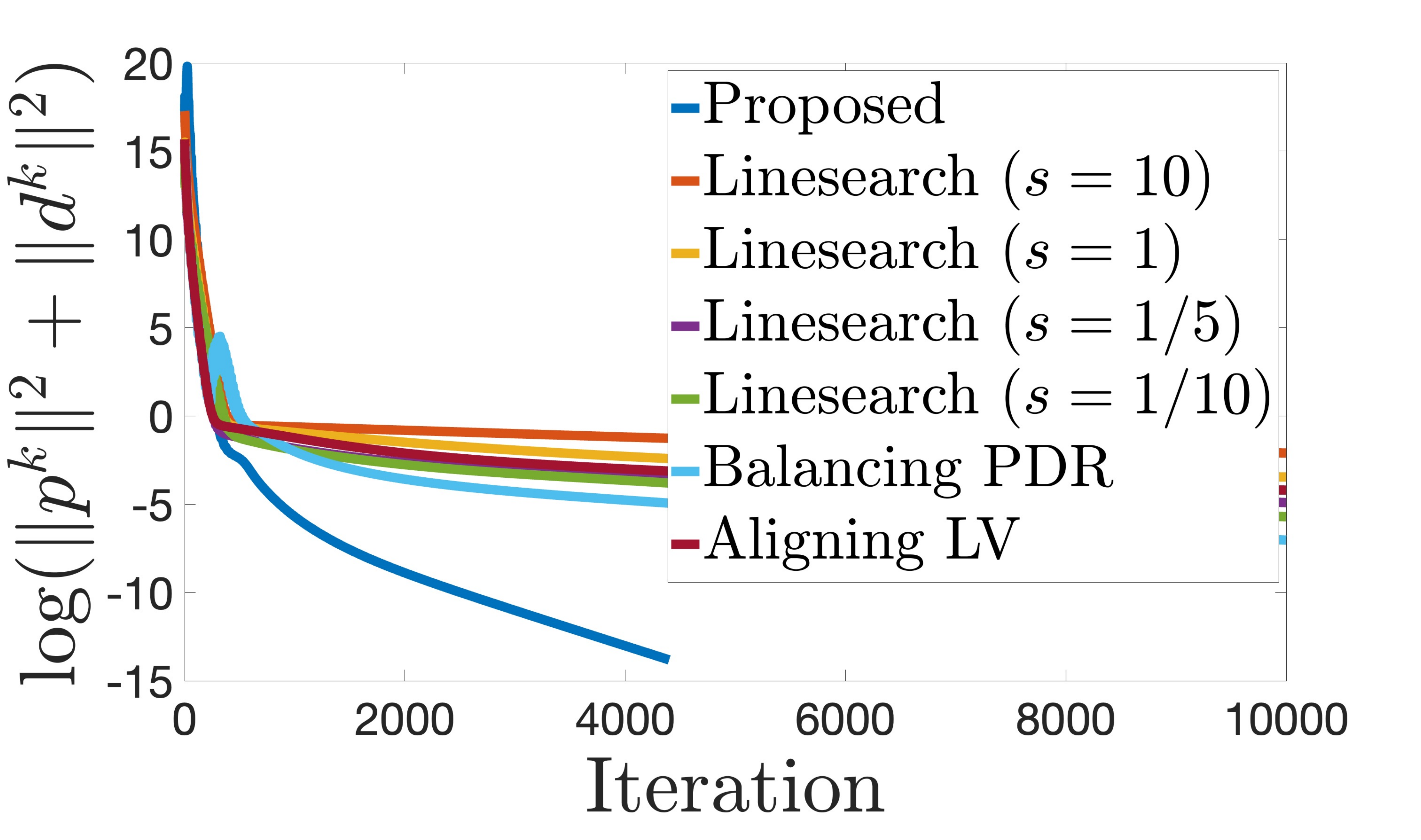}
\endminipage\hfill
\minipage{0.25\textwidth}
  \includegraphics[width=\linewidth]{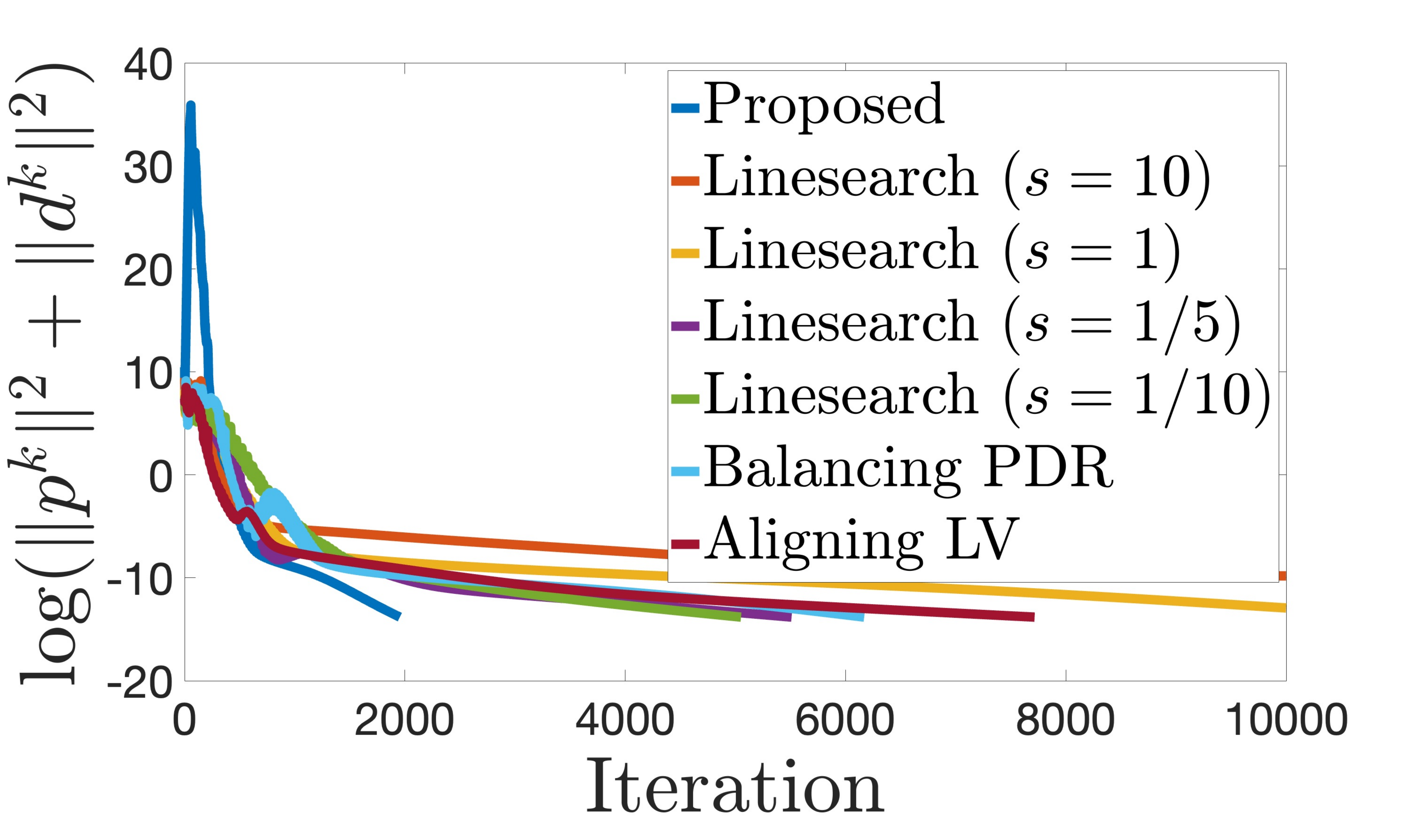}
\endminipage\hfill
\minipage{0.25\textwidth}
  \includegraphics[width=\linewidth]{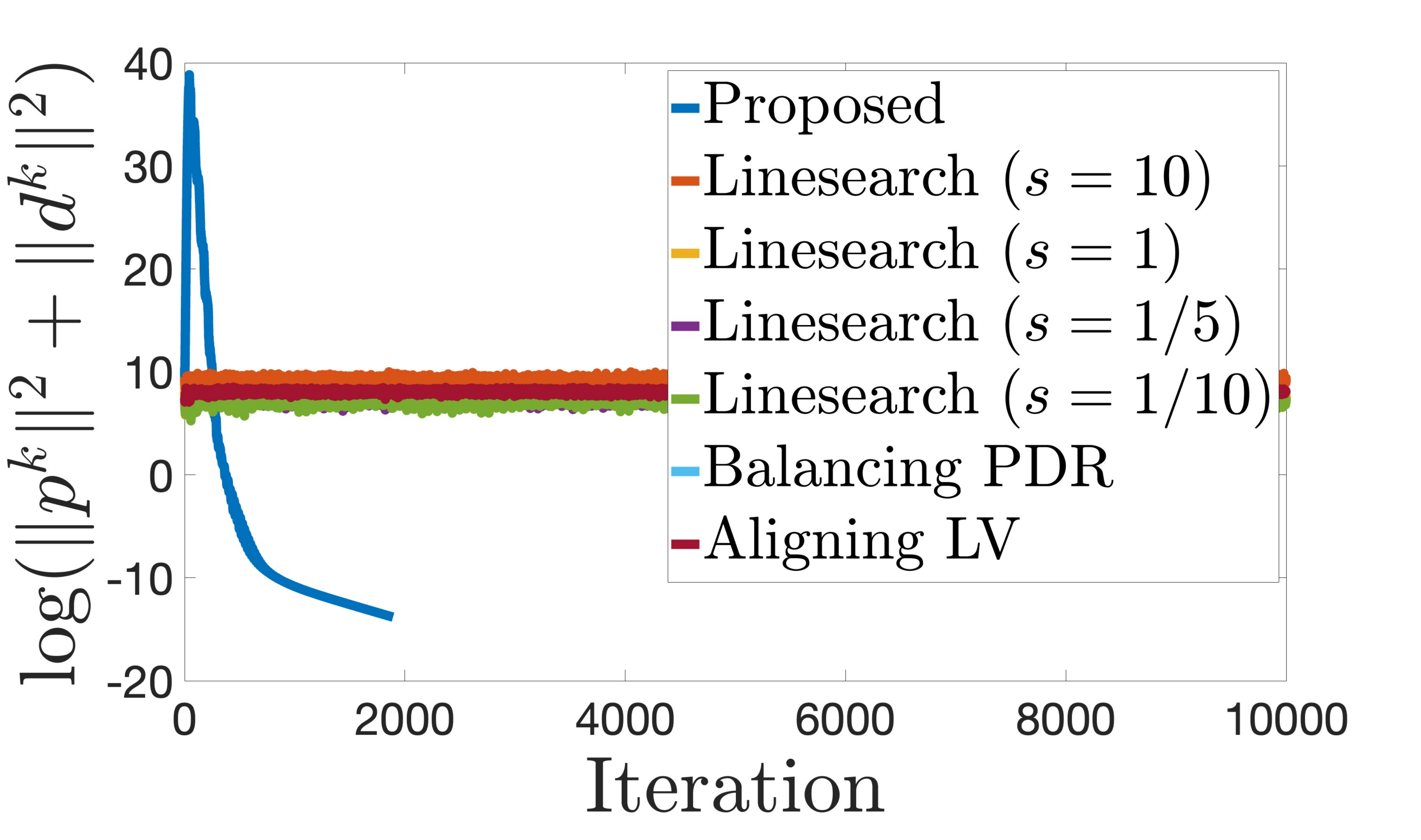}
\endminipage\hfill
\minipage{0.25\textwidth}
  \includegraphics[width=\linewidth]{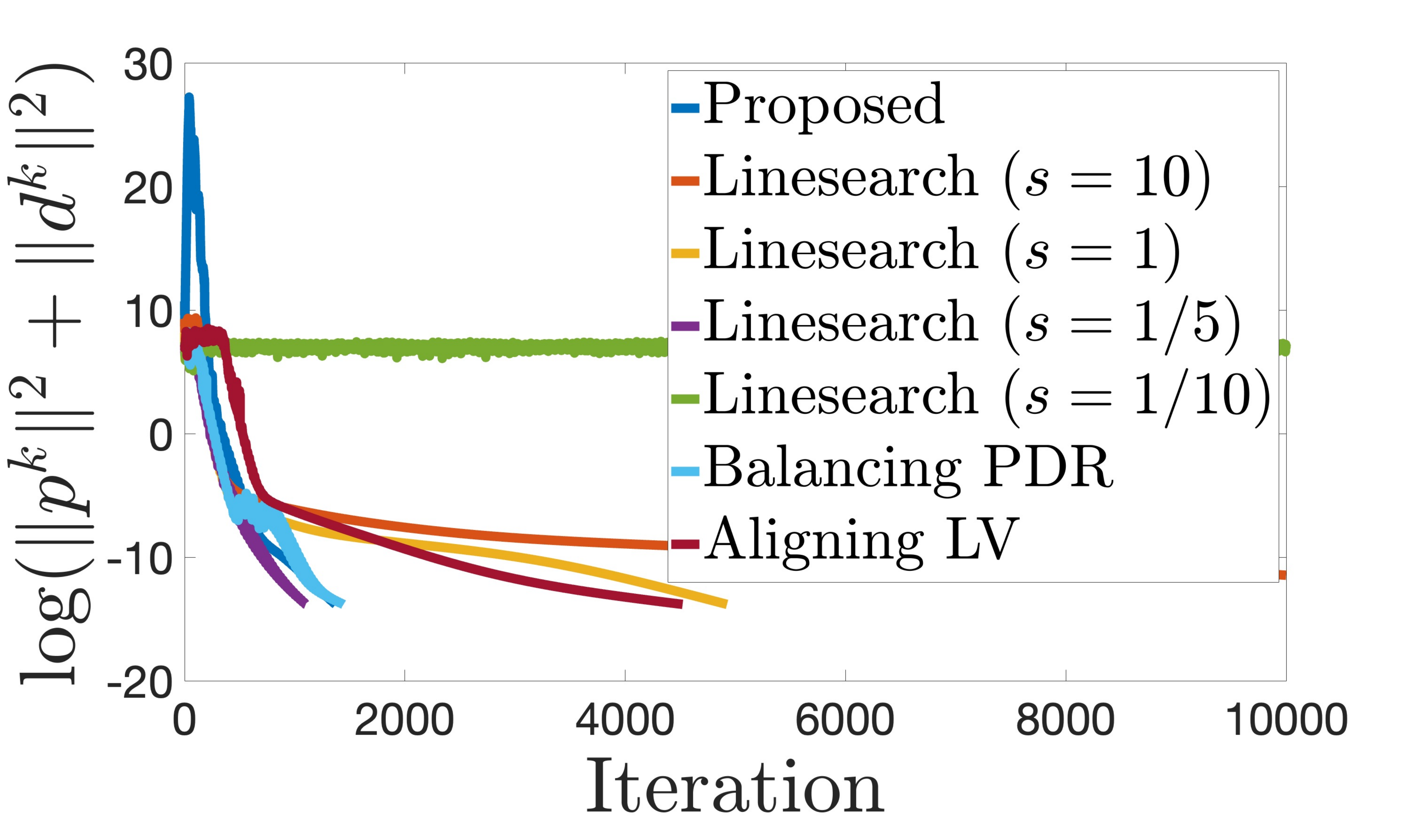}
\endminipage\hfill
\minipage{0.25\textwidth}
  \includegraphics[width=\linewidth]{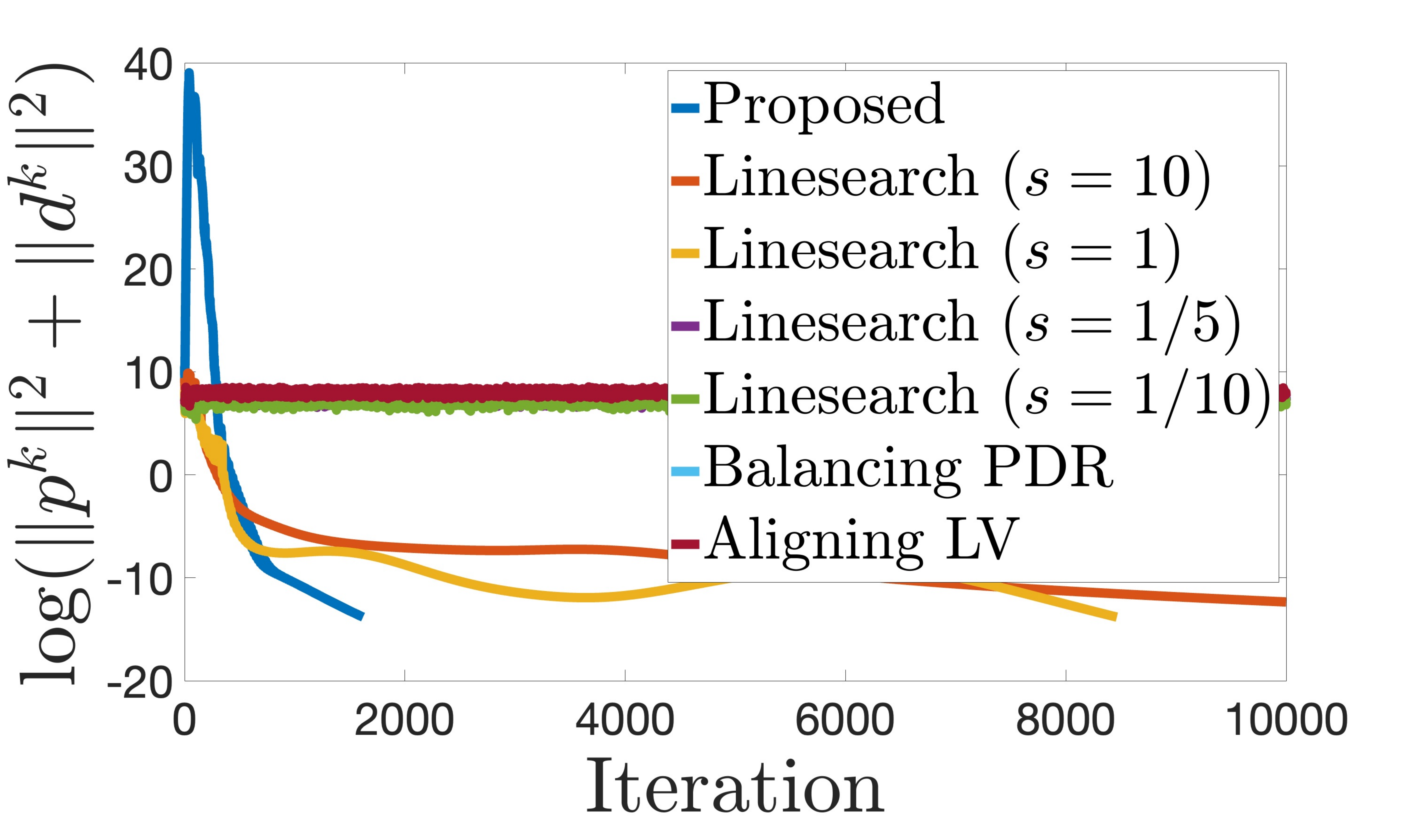}
\endminipage\hfill
\minipage{0.25\textwidth}
  \includegraphics[width=\linewidth]{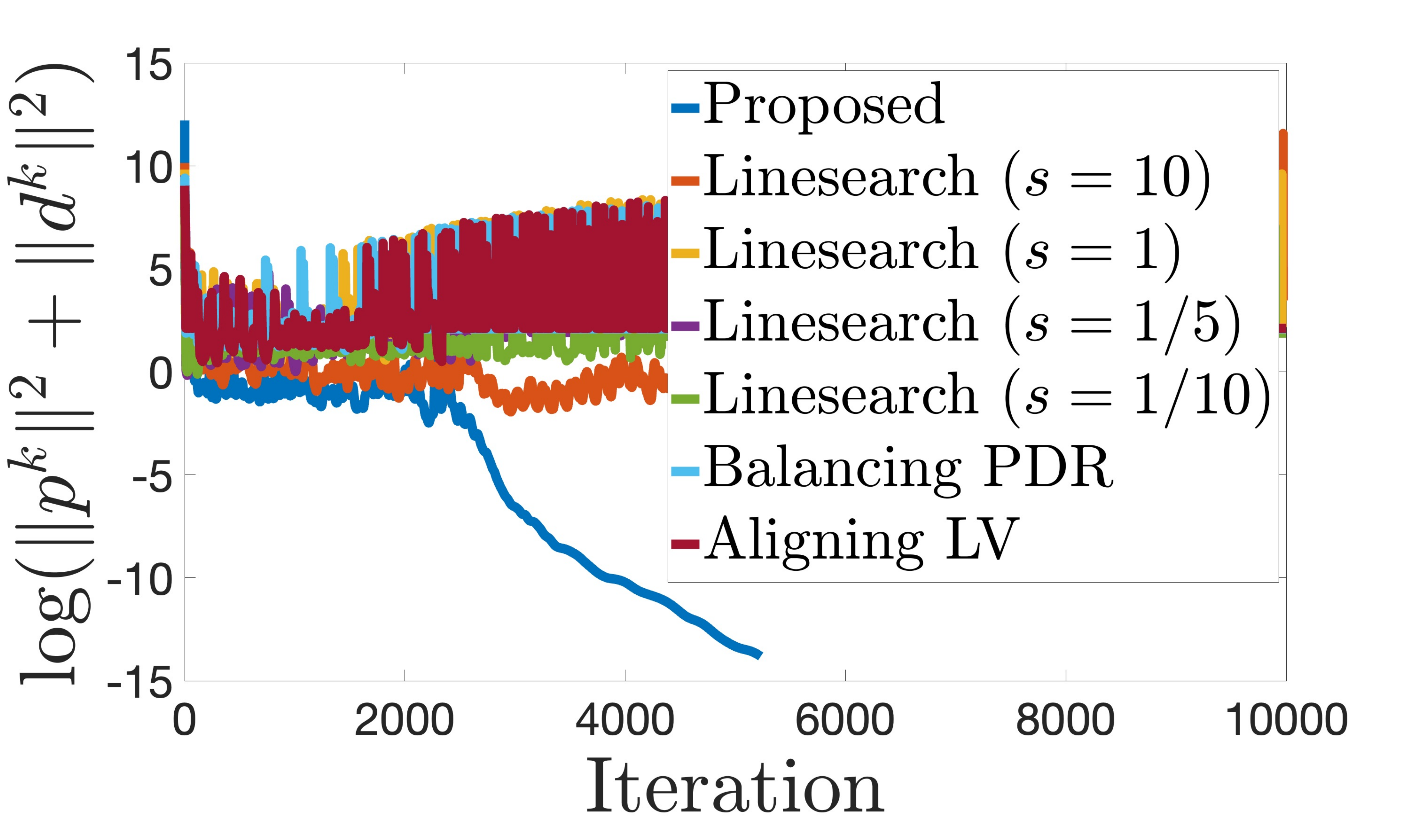}
\endminipage\hfill
\minipage{0.25\textwidth}
  \includegraphics[width=\linewidth]{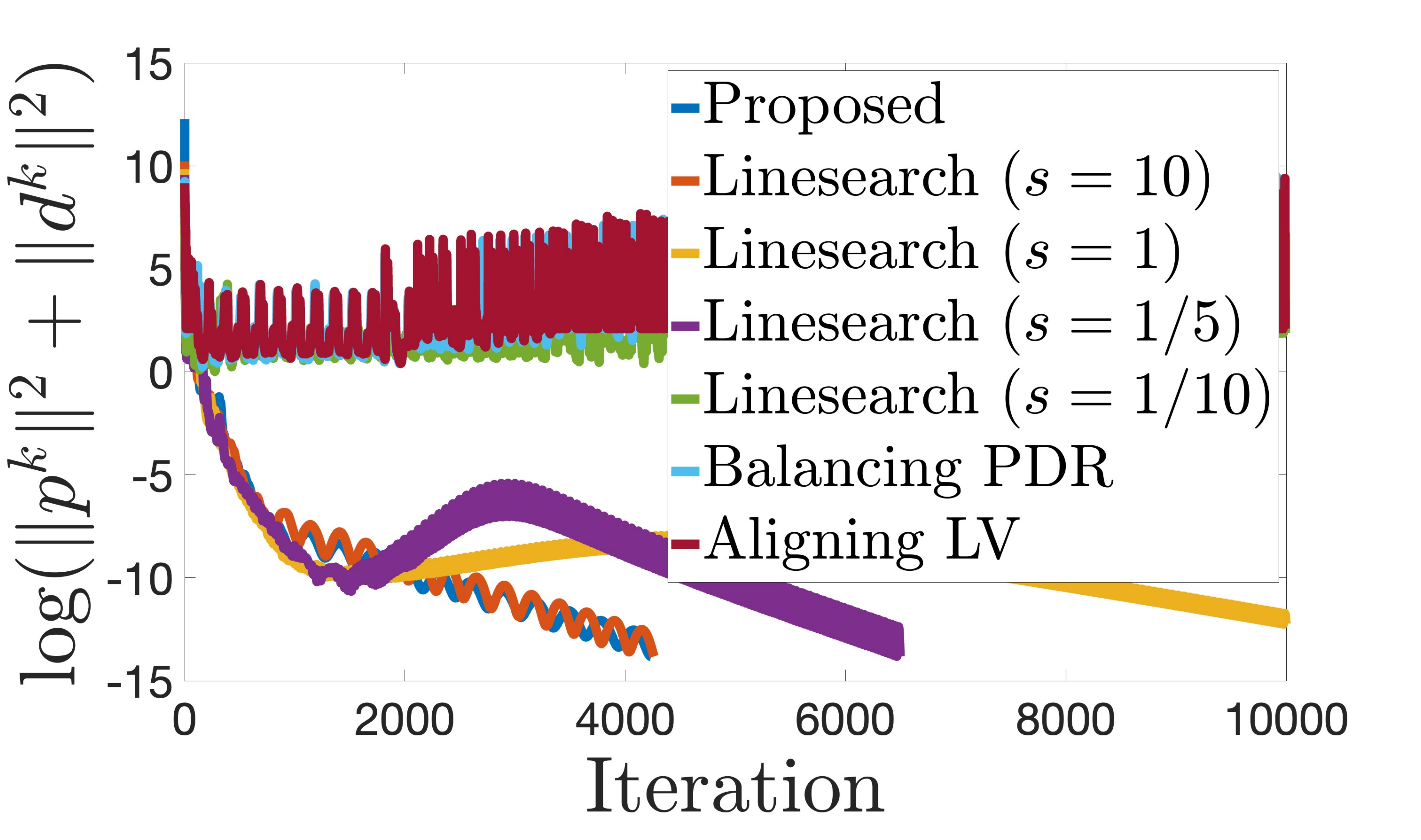}
\endminipage\hfill
\minipage{0.25\textwidth}
  \includegraphics[width=\linewidth]{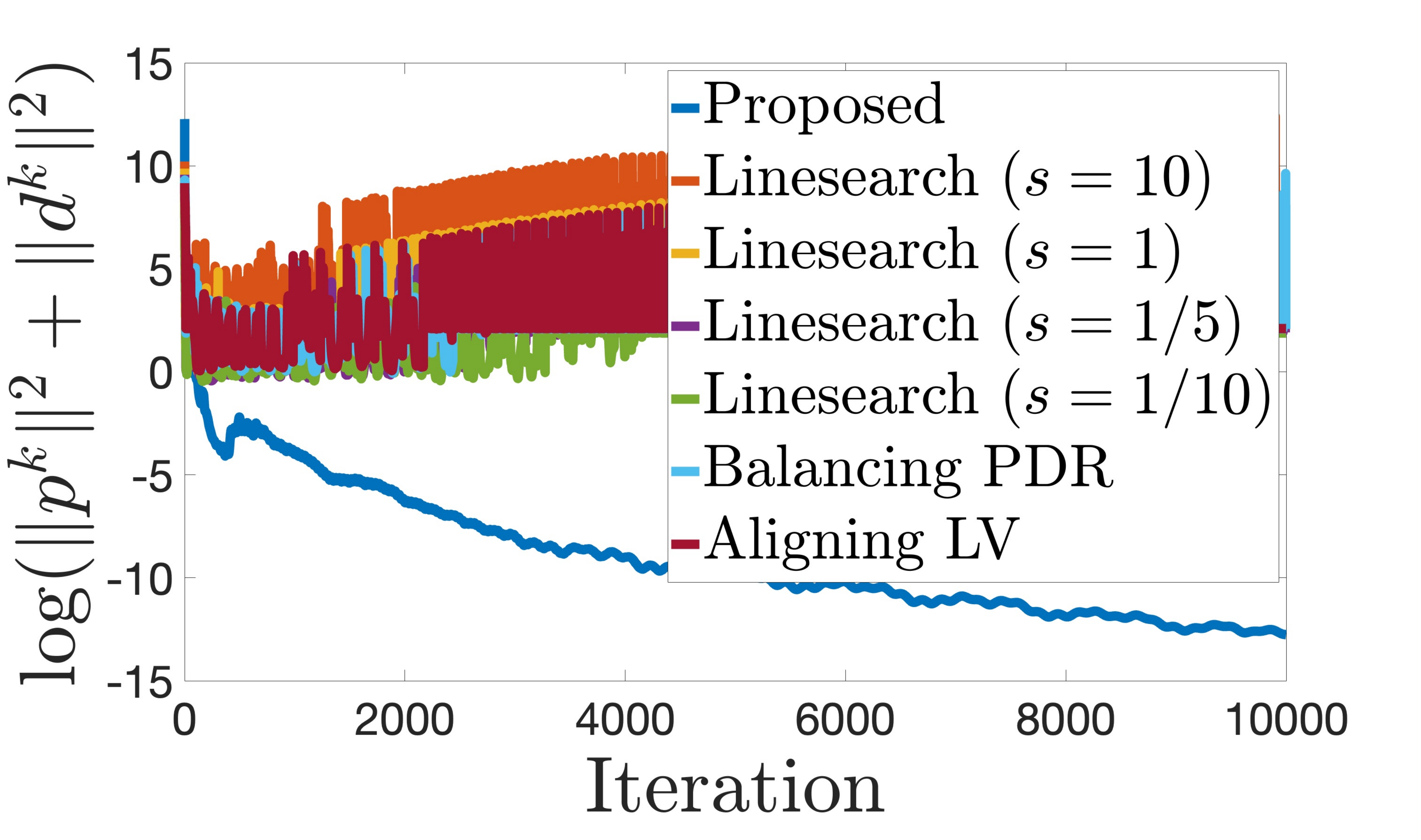}
\endminipage\hfill
\minipage{0.25\textwidth}
  \includegraphics[width=\linewidth]{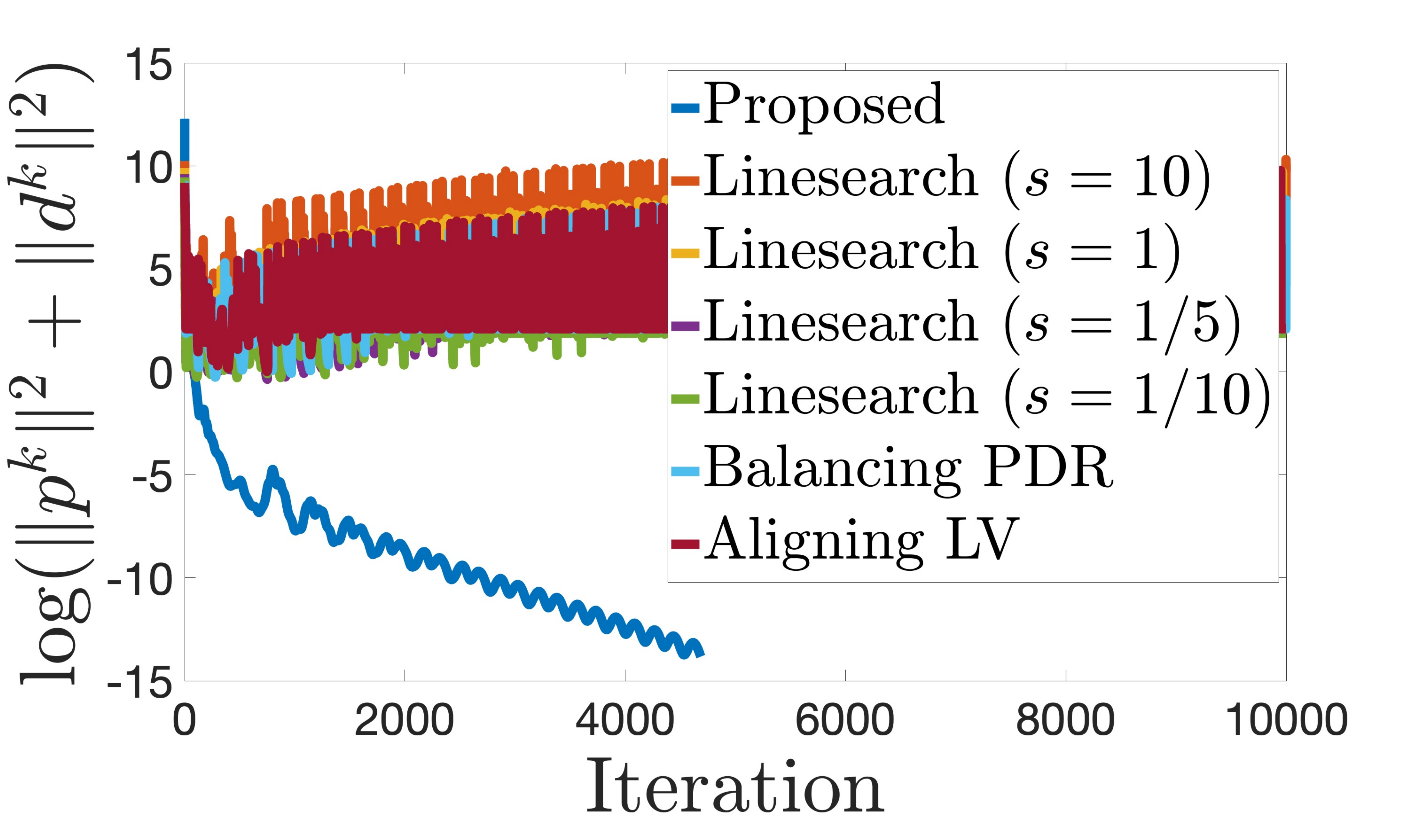}
\endminipage
\caption{The rows from top to bottom are the results of RG, MC, and SNL. The columns from left to right are the results of $\texttt{rng(1)}$, $\texttt{rng(2)}$, $\texttt{rng(3)}$, and $\texttt{rng(4)}$.}
\label{fig:random}
\end{figure}

\begin{table}[h!]
\small
\centering
\begin{tabular}{|c|c|c|c|c|c|c|c|}
\hline
\textbf{RG} & Proposed & LS $(s=10)$ & LS $(s=1)$ & LS $(s=1/5)$ & LS $(s=1/10)$ & B-PDR & A-LV \\
\hline
   $< 5000$  & $38 \%$ & $5 \%$  &   $0 \%$      &   $1 \%$    & $3 \%$   & $8 \%$ & $3 \%$  \\
\hline
  $< 10000$ & $55 \%$ & $21 \%$ & $22 \%$ & $17 \%$    & $18 \%$  & $34 \%$ & $41 \%$    \\
\hline
  $< 25000$ & $89 \%$ & $78 \%$  &    $62 \%$    &   $75 \%$    & $80 \%$    & $73 \%$ & $76 \%$      \\
\hline
 \textbf{MC} & Proposed & LS $(s=10)$ & LS $(s=1)$ & LS $(s=1/5)$ & LS $(s=1/10)$ & B-PDR & A-LV \\
\hline
   $< 2500$  & $70 \%$ & $0 \%$  &   $7 \%$      &   $23 \%$    & $20 \%$   & $45 \%$ & $33 \%$  \\
\hline
  $< 5000$ & $87 \%$ & $25 \%$ & $19 \%$ & $26 \%$    & $35 \%$  & $69 \%$ & $36 \%$    \\
\hline
  $< 10000$ & $91 \%$ & $63 \%$  &    $80 \%$    &   $73\%$    & $69 \%$    & $88 \%$ & $66 \%$      \\
\hline
 \textbf{SNL} & Proposed & LS $(s=10)$ & LS $(s=1)$ & LS $(s=1/5)$ & LS $(s=1/10)$ & B-PDR & A-LV \\
\hline
   $< 7500$  & $24 \%$ & $22 \%$  &   $0 \%$      &   $10 \%$    & $5 \%$   & $11 \%$ & $1 \%$  \\
\hline
  $< 15000$ & $60 \%$ & $30 \%$ & $27 \%$ & $25 \%$    & $16 \%$  & $34 \%$ & $41 \%$    \\
\hline
  $< 30000$ & $73 \%$ & $54 \%$  &    $33 \%$    &   $59 \%$    & $40 \%$    & $62 \%$ & $62 \%$      \\
\hline
\end{tabular}
\caption{The blocks from top to bottom are the results of RG, MC, and SNL.}
\label{tab:random}
\end{table}

\newpage
\appendix
\section{Appendix}
\subsection{Proof of Theorem~\ref{theorem_pdhg4sdp}}
\ThPDHGforSDP*
\begin{proof}
The core idea of the proof comes from Section 4 of \cite{o2020equivalence}, which shows that PDHG can be viewed as a special case of DRS. It allows us to translate known convergence results from \cite{lorenz2019non} on non-stationary DRS to Algorithm~\ref{algm:pdhg_sdp}. 

We first show that Algorithm~\ref{algm:pdhg_sdp} is a instance of non-stationary DRS, which is extensively studied in \cite{lorenz2019non},
\begin{align*}
    z^{k+1} = J_{\alpha_{k}A}\left(J_{\alpha_{k-1}B}(z^{k}) + \frac{\alpha_{k}}{\alpha_{k-1}}\Big(J_{\alpha_{k-1}B}(z^{k}) - z^{k}\Big)\right) + 
    \frac{\alpha_k}{\alpha_{k-1}}\left(z^{k} - J_{\alpha_{k-1}B}(z^{k})\right),
\end{align*}
where $A$ and $B$ are two maximally monotone operators. Note that if the stepsize is fixed, non-stationary DRS is reduced to the vanilla one
\begin{align*}
    z^{k+1} = J_{\alpha A}\left(2 J_{\alpha B}(z^{k}) - z^{k}\right) + 
    z^{k} - J_{\alpha B}(z^{k}).
\end{align*}
Similar to \cite{o2020equivalence}, we introduce a linear operator $\mathcal{T}\colon \mathbb{S}^{n\times n} \to \mathbb{R}^m$ and then reformulate the primal form of SDP \eqref{prob:primal_sdp} as 
\begin{align*}
\min_{X} \quad  \langle C, X \rangle + \mathbb{I}_{\mathbb{S}_{+}^{n\times n}} (X) + \mathbb{I}_{=0} (\hat{X}) +  \mathbb{I}_{=b} \left(\mathcal{A}(X) + \mathcal{T}(\hat{X})\right),
\end{align*}
which is equivalent to the following monotone inclusion problem:
\begin{align}
\label{prob:sdp_mip}
    \text{Find}\ \begin{pmatrix}
        X \\
        \hat{X}
    \end{pmatrix}\quad \text{s.t.}\quad  0 \in  \underbrace{\begin{pmatrix} C \\
    0 \end{pmatrix} + \begin{pmatrix}\partial_{X}\mathbb{I}_{\mathbb{S}_{+}^{n\times n}} (X) \\
    0
    \end{pmatrix} + 
    \begin{pmatrix} 0 \\
    \partial_{\hat{X}}\mathbb{I}_{=0} (\hat{X})
    \end{pmatrix}}_{B = \partial f(X,\hat{X})} + \underbrace{\begin{pmatrix}  \partial_{X}\mathbb{I}_{=b} (\mathcal{A}(X) + \mathcal{T}(\hat{X})) \\ 
    \partial_{\hat{X}}\mathbb{I}_{=b} (\mathcal{A}(X) + \mathcal{T}(\hat{X}))
    \end{pmatrix}}_{A = \partial g(X,\hat{X})},
\end{align}
where $\partial f(X,\hat{X})$ and $\partial g(X,\hat{X})$ are maximally monotone. Applying non-stationary DRS to \eqref{prob:sdp_mip} with $z = \begin{pmatrix}
        X \\
        \hat{X}
    \end{pmatrix}$, we get the following fixed-point iteration:
\begin{align}
    & \begin{pmatrix}
        X^{k} \\
        \hat{X}^{k}
    \end{pmatrix} 
    =
    J_{\alpha_{k-1}\partial f}\Bigg(\begin{pmatrix}
        Z^{k} \\
        \hat{Z}^{k}
    \end{pmatrix}\Bigg) \label{update_X12}\\
    & \begin{pmatrix}
        X^{k+\frac{1}{2}} \\
        \hat{X}^{k+\frac{1}{2}}
    \end{pmatrix} 
    = J_{\alpha_{k}\partial g}\Bigg(\begin{pmatrix}
        X^{k} \\
        \hat{X}^{k}
    \end{pmatrix} + \theta_k\bigg(\begin{pmatrix}
        X^{k} \\
        \hat{X}^{k}
    \end{pmatrix} - \begin{pmatrix}
        Z^{k} \\
        \hat{Z}^{k}
    \end{pmatrix}\bigg)\Bigg) \label{update_X1}\\
    & \begin{pmatrix}
        Z^{k+1} \\
        \hat{Z}^{k+1}
    \end{pmatrix} 
    = \begin{pmatrix}
        X^{k+\frac{1}{2}} \\
        \hat{X}^{k+\frac{1}{2}}
    \end{pmatrix} + \theta_k\Bigg(\begin{pmatrix}
        Z^{k} \\
        \hat{Z}^{k}
    \end{pmatrix} - \begin{pmatrix}
        X^{k} \\
        \hat{X}^{k}
    \end{pmatrix}\Bigg)\label{update_Z}.
\end{align}
\eqref{update_X12} can be simplified to
\begin{align}
    X^{k} &= \argmin_{X}\Big\{\langle C, X \rangle + \mathbb{I}_{\mathbb{S}_{+}^{n\times n}}(X)+\frac{1}{2\alpha_{k-1}}\norm{X-Z^k}^2\Big\} \label{update_X12_new}\\
    \hat{X}^{k} &= 0 \notag
\end{align}
\eqref{update_X1} can be simplified to 
\begin{align}
    \begin{pmatrix}
        X^{k+\frac{1}{2}} \\
        \hat{X}^{k+\frac{1}{2}}
    \end{pmatrix} 
    & = J_{\alpha_{k}\partial g}\Bigg(\begin{pmatrix}
        X^{k} \\
        \hat{X}^{k}
    \end{pmatrix} + \theta_k\bigg(\begin{pmatrix}
        X^{k} \\
        \hat{X}^{k}
    \end{pmatrix} - \begin{pmatrix}
        Z^{k} \\
        \hat{Z}^{k}
    \end{pmatrix}\bigg)\Bigg) \nn\\
    & = \Bigg(1+\alpha_{k}\partial g\Bigg)^{-1}\Bigg(\begin{pmatrix}
        X^{k} \\
        \hat{X}^{k}
    \end{pmatrix} + \theta_k\bigg(\begin{pmatrix}
        X^{k} \\
        \hat{X}^{k}
    \end{pmatrix} - \begin{pmatrix}
        Z^{k} \\
        \hat{Z}^{k}
    \end{pmatrix}\bigg)\Bigg) \nn\\
    & \Longleftrightarrow \nn \\
    & \begin{pmatrix}
        X^{k}+\theta_k(X^{k}-Z^k) \\
        -\theta_k\hat{Z}^{k}
    \end{pmatrix} \in \begin{pmatrix}
        X^{k+\frac{1}{2}} \\
        \hat{X}^{k+\frac{1}{2}}
    \end{pmatrix} + \alpha_k\mathcal{A}^T\bigg(\partial \mathbb{I}_{=b}\Big(\mathcal{A}\big(X^{k+\frac{1}{2}}) + \mathcal{T}\big(\hat{X}^{k+\frac{1}{2}})\Big)\bigg) \nn \\
    & \Longleftrightarrow \nn \\
    & y^{k+1} \in \partial \mathbb{I}_{=b}\Big(\mathcal{A}\big(X^{k+\frac{1}{2}}) + \mathcal{T}\big(\hat{X}^{k+\frac{1}{2}})\Big) \nn\\
    & X^{k+\frac{1}{2}} + \alpha_k \mathcal{A}^T(y^{k+1}) = X^{k}+\theta_k(X^{k}-Z^k) \nn\\
    & \hat{X}^{k+\frac{1}{2}} + \alpha_k \mathcal{T}^T(y^{k+1}) = -\theta_k\hat{Z}^{k}   \nn\\
    & \Longleftrightarrow \nn \\
    & \mathcal{A}\big(X^{k+\frac{1}{2}}) + \mathcal{T}\big(\hat{X}^{k+\frac{1}{2}}) \in \partial_{y} \mathbb{I}^{*}_{=b}(y^{k+1}) \nn\\
    & X^{k+\frac{1}{2}} = X^{k}+\theta_k(X^{k}-Z^k) - \alpha_k \mathcal{A}^T(y^{k+1}) \nn\\
    & \hat{X}^{k+\frac{1}{2}} = -\theta_k\hat{Z}^{k} - \alpha_k \mathcal{T}^T(y^{k+1}) \nn\\
    & \Longleftrightarrow \nn \\
    & \mathcal{A}\big(X^{k}+\theta_k(X^{k}-Z^k)\big)-\theta_k \mathcal{T}\big(\hat{Z}^k\big)-\alpha_k \mathcal{A}\big(\mathcal{A}^T(y^{k+1})\big) -\alpha_k \mathcal{T}\big(\mathcal{T}^T(y^{k+1})\big) \in \partial_{y} \mathbb{I}^{*}_{=b}(y^{k+1}) \nn\\
    & X^{k+\frac{1}{2}} = X^{k}+\theta_k(X^{k}-Z^k) - \alpha_k \mathcal{A}^T(y^{k+1}) \nn\\
    & \hat{X}^{k+\frac{1}{2}} = -\theta_k\hat{Z}^{k} - \alpha_k \mathcal{T}^T(y^{k+1}) \nn\\
    & \Longleftrightarrow \nn \\
    & y^{k+1} = \argmin_{y} \Big\{\mathbb{I}_{=b}^{*} (y)-\Big\langle\mathcal{A}\big(X^{k}+\theta_k(X^{k}-Z^k)\big)-\theta_k \mathcal{T}\big(\hat{Z}^k\big), y\Big\rangle \nn\\
    & \quad\quad\quad\quad\quad+ \frac{\alpha_k}{2}(\norm{\mathcal{A}^T(y)}^2+\norm{\mathcal{T}^T(y)}^2)\Big\} \label{update_u}\\
    & X^{k+\frac{1}{2}} = X^{k}+\theta_k(X^{k}-Z^k) - \alpha_k \mathcal{A}^T(y^{k+1}) \nn\\
    & \hat{X}^{k+\frac{1}{2}} = -\theta_k\hat{Z}^{k} - \alpha_k \mathcal{T}^T(y^{k+1}) \nn,
\end{align}
\eqref{update_Z} can be simplified to 
\begin{align}
    Z^{k+1} &= X^{k} -\alpha_k \mathcal{A}^T(y^{k+1}) \label{update_Z_new}\\
    \hat{Z}^{k+1} &=  - \alpha_k \mathcal{T}^T(y^{k+1}). \label{update_hatZ_new}
\end{align}
Substituting \eqref{update_Z_new} into \eqref{update_X12_new}, we get
\begin{align}
X^{k} &= \argmin_{X}\Big\{\langle C, X \rangle + \mathbb{I}_{\mathbb{S}_{+}^{n\times n}}(X)+\frac{1}{2\alpha_{k-1}}\norm{X-\big(X^{k-1}  -\alpha_{k-1} \mathcal{A}^T(y^k)\big)}^2\Big\} \notag\\
&= \text{Prox}_{\alpha_{k-1} \left(\langle C, \cdot \rangle + \mathbb{I}_{\mathbb{S}_{+}^{n\times n}} (\cdot)\right)}\Big(X^{k-1}  -\alpha_{k-1} \mathcal{A}^T(y^k)\Big) \notag\\
&= \text{Prox}_{\alpha_{k-1}  \mathbb{I}_{\mathbb{S}_{+}^{n\times n}} (\cdot)}\Big(X^{k-1} - \alpha_{k-1} \mathcal{A}^T(y^k)-\alpha_{k-1} C\Big)\notag\\
&= \text{Proj}_{\mathbb{S}_{+}^{n\times n}}\Big(X^{k-1} - \alpha_{k-1} \mathcal{A}^T(y^k)-\alpha_{k-1} C\Big)
\end{align}
Substituting \eqref{update_Z_new} and \eqref{update_hatZ_new} into \eqref{update_u}, we get
\begin{align*}
    y^{k+1} &= \argmin_{y} \Big\{\mathbb{I}_{=b}^{*} (y)-\langle\mathcal{A}(X^{k}+\theta_k(X^{k}-X^{k-1})), y\rangle + \frac{\alpha_k}{2}\norm{y -y^{k}}^2_{\mathcal{A}\mathcal{A}^T+\mathcal{T}\mathcal{T}^T}\Big\}.
\end{align*}
By Lemma~\ref{lemma1}, we can find a $\mathcal{T}$ such that $\mathcal{T}\mathcal{T}^T = \frac{1}{R}\mathcal{I}-\mathcal{A}\mathcal{A}^T$, $\alpha_k\beta_k = R < \frac{1}{\lambda_{\max}(\mathcal{A}^T\mathcal{A})}$ for $k=1,2,\dots$, and then get
\begin{align*}
    y^{k+1} &= \text{Prox}_{\beta_k \left(\mathbb{I}_{=b} (\cdot)\right)^*}(y^k + \beta_k \mathcal{A}(X^{k}-\theta_k(X^{k}-X^{k-1}))) \\
    &= y^k + \beta_k \mathcal{A}(X^{k+1}-\theta_k(X^{k}-X^{k-1})) - \beta_k \text{Prox}_{\frac{1}{\beta_k} \mathbb{I}_{=b} (\cdot)}(\frac{y^k + \beta_k \mathcal{A}(X^{k}-\theta_k(X^{k}-X^{k-1}))}{\beta_k}) \\
    &= y^k + \beta_k \mathcal{A}(X^{k}-\theta_k(X^{k}-X^{k-1})) - \beta_k \text{Proj}_{=b}(\frac{y^k + \beta_k \mathcal{A}(X^{k}-\theta_k(X^{k}-X^{k-1}))}{\beta_k}) \\
    &= y^k + \beta_k \mathcal{A}(X^{k}-\theta_k(X^{k}-X^{k-1})) - \beta_k b.
\end{align*}
Finally, we have
\begin{align*}
    & X^{k} = \text{Proj}_{\mathbb{S}_{+}^{n\times n}}\Big(X^{k-1} - \alpha_{k-1} \mathcal{A}^T(y^k)-\alpha_{k-1} C\Big) \\
    & y^{k+1} = y^k + \beta_k \mathcal{A}(X^{k}-\theta_k(X^{k}-X^{k-1})) - \beta_k b
\end{align*}
where $X^{k}$ and $y^{k+1}$ are the primal and dual variables of SDP, respectively. Since Algorithm~\ref{algm:pdhg_sdp} is an instance of non-stationary DRS, we can conclude the proof by Theorem~\ref{theorem_drs}.
\end{proof}

\begin{lemma}
\label{lemma1}
    There exists linear operator $\mathcal{T}\colon \mathbb{R}^{n\times n} \rightarrow \mathbb{R}^{m}$ such that $\mathcal{T}\mathcal{T}^T = \frac{1}{R}\mathcal{I}-\mathcal{A}\mathcal{A}^T$.
\end{lemma}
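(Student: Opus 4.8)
The plan is to realize $\mathcal{T}$ through a Gram-matrix construction, which reduces the whole statement to a positive-semidefiniteness check. The key observation is that for any linear $\mathcal{T}\colon \mathbb{S}^{n\times n}\to\mathbb{R}^m$ the composition $\mathcal{T}\mathcal{T}^T$ is automatically a symmetric positive-semidefinite operator on $\mathbb{R}^m$; conversely, every symmetric PSD matrix $M\in\mathbb{R}^{m\times m}$ whose rank does not exceed $\dim\mathbb{S}^{n\times n}=\tfrac{n(n+1)}{2}$ can be written in this form. Hence it suffices to (i) verify that $M \coloneqq \frac{1}{R}\mathcal{I}-\mathcal{A}\mathcal{A}^T$ is symmetric PSD, and (ii) exhibit an explicit $\mathcal{T}$ whose Gram matrix equals $M$.

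For step (i), note first that $M$ is symmetric, since $\mathcal{I}$ and $\mathcal{A}\mathcal{A}^T$ both are. To control its spectrum I would invoke the standard fact that the self-compositions $\mathcal{A}\mathcal{A}^T$ (on $\mathbb{R}^m$) and $\mathcal{A}^T\mathcal{A}$ (on $\mathbb{S}^{n\times n}$) share the same nonzero eigenvalues, so that $\lambda_{\max}(\mathcal{A}\mathcal{A}^T)=\lambda_{\max}(\mathcal{A}^T\mathcal{A})$. The hypothesis $R<\frac{1}{\lambda_{\max}(\mathcal{A}^T\mathcal{A})}$ then yields $\frac{1}{R}>\lambda_{\max}(\mathcal{A}\mathcal{A}^T)$, so every eigenvalue of $M$ is at least $\frac{1}{R}-\lambda_{\max}(\mathcal{A}\mathcal{A}^T)>0$. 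Thus $M\succ 0$, in particular PSD and of full rank $m$.

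For step (ii), I would take the symmetric PSD square root $M^{1/2}\in\mathbb{R}^{m\times m}$ obtained from the eigendecomposition of $M$. Fix an orthonormal basis $\{E_1,\dots,E_N\}$ of $\mathbb{S}^{n\times n}$ under the trace inner product, where $N=\tfrac{n(n+1)}{2}$, and define symmetric matrices $T_i\coloneqq\sum_{l=1}^{m}(M^{1/2})_{il}\,E_l$ for $i=1,\dots,m$. Setting $\mathcal{T}(X)\coloneqq(\langle T_1,X\rangle,\dots,\langle T_m,X\rangle)^T$, its adjoint is $\mathcal{T}^T(w)=\sum_i w_i T_i$, so that $(\mathcal{T}\mathcal{T}^T)_{ij}=\langle T_i,T_j\rangle=\sum_{l}(M^{1/2})_{il}(M^{1/2})_{jl}=M_{ij}$, which is exactly the desired identity.

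The main obstacle is the dimension bookkeeping in step (ii): the construction needs $N=\tfrac{n(n+1)}{2}\geq m$ so that $m$ matrices realizing the rank-$m$ Gram structure actually fit inside $\mathbb{S}^{n\times n}$. This holds whenever the constraint matrices $A_1,\dots,A_m$ are linearly independent—the standard non-degeneracy assumption for SDP—since then $m\leq\dim\mathbb{S}^{n\times n}$. The remaining pieces, namely the shared-spectrum fact and the adjoint computation giving $\langle T_i,T_j\rangle=M_{ij}$, are routine.
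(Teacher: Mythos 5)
Your proof is correct and follows essentially the same route as the paper's: both construct $\mathcal{T}$ as a Gram realization of the positive definite matrix $\frac{1}{R}\mathcal{I}-\mathcal{A}\mathcal{A}^T$ via a spectral factorization (you use $M^{1/2}$ against an orthonormal basis of $\mathbb{S}^{n\times n}$, the paper uses the eigendecomposition against the vectorization into $\mathbb{R}^{n^2}$). If anything, yours is slightly more complete, since you justify $\frac{1}{R}\mathcal{I}-\mathcal{A}\mathcal{A}^T\succ 0$ from $R<1/\lambda_{\max}(\mathcal{A}^T\mathcal{A})$ and make the dimension condition $m\leq\dim\mathbb{S}^{n\times n}$ explicit, both of which the paper leaves as unstated assertions.
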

\begin{proof}
    A constructive proof is provided here. Let the vectorization of $X,A_1,\dots,A_m$ be $\text{vec}(X),\text{vec}(A_1),\dots,\text{vec}(A_m)\in\mathbb{R}^{n^2}$, respectively. Define the matrix representation of linear operators $\mathcal{T}$ and $\mathcal{A}$ as 
    \begin{align*}
        T = \begin{bmatrix}
            \text{vec}(T_1)^T \\
            \vdots \\
            \text{vec}(T_m)^T
    \end{bmatrix}\in\mathbb{R}^{m\times n^2},\quad 
        A = \begin{bmatrix}
            \text{vec}(A_1)^T \\
            \vdots \\
            \text{vec}(A_m)^T
    \end{bmatrix}\in\mathbb{R}^{m\times n^2}.
    \end{align*}
    Define $S:=\frac{1}{R} I-AA^T\succ 0$. Since $m\leq n^2$, $T$ can be constructed as
    \begin{align*}
        T = \begin{bmatrix}
            \sqrt{\lambda_1}v_1 \\
            \vdots \\
            \sqrt{\lambda_m}v_m \\
            \mathbf{0}^{m\times (n^2-m)}
        \end{bmatrix},
    \end{align*}
    where $S$ is eigenvalue-decomposed as $S=\sum_{i=1}^{m}\lambda_i v_iv_i^T$. Since $TT^T=S$, $T$ is the desired matrix representation of $\mathcal{T}$. Therefore, we have explicitly construct the linear operator $\mathcal{T}$.
\end{proof}

\begin{restatable}{thm-rest}{ThDRS}
\label{theorem_drs}
    Let $A$ and $B$ be maximally monotone and $\{\alpha_k\}_k$ be a positive sequence such as
    \begin{align*}
        \alpha_k \in \left(\alpha_{\min}, \alpha_{\max}\right),\quad \sum_{k=1}^{\infty}\left|\alpha_{k+1}-\alpha_{k}\right|<\infty,
    \end{align*}
    where $0<\alpha_{\min}\leq \alpha_{\max} < \infty$. Then non-stationary DRM
    \begin{align*}
    z^{k+1} = J_{\alpha_{k}A}\left(J_{\alpha_{k-1}B}(z^{k}) + \frac{\alpha_{k}}{\alpha_{k-1}}\Big(J_{\alpha_{k-1}B}(z^{k}) - z^{k}\Big)\right) + 
    \frac{\alpha_k}{\alpha_{k-1}}\left(z^{k} - J_{\alpha_{k-1}B}(z^{k})\right),
    \end{align*}
    weakly converges to $z^*$ such that $0\in (A+B)(z^*)$.
\end{restatable}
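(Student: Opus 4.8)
The plan is to recognize the recursion as exactly the non-stationary Douglas--Rachford scheme analyzed in \cite{lorenz2019non} and to reproduce its convergence argument, which combines firm nonexpansiveness of resolvents, a quasi-Fej\'er (summable-perturbation) estimate, and Opial's lemma. First I would rewrite one step in operator form: setting $u^k = J_{\alpha_{k-1}B}(z^k)$ and $\theta_k = \alpha_k/\alpha_{k-1}$, the update reads $z^{k+1} = J_{\alpha_k A}\big(u^k + \theta_k(u^k - z^k)\big) + \theta_k(z^k - u^k)$. When $\alpha_k = \alpha_{k-1}$ this collapses to the averaged DRS operator $T_\alpha = I - J_{\alpha B} + J_{\alpha A}(2J_{\alpha B} - I)$, whose fixed points $z^*$ correspond, via $x^* = J_{\alpha B}(z^*)$, to the solution set $\{x : 0 \in (A+B)(x)\}$. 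The ratio $\theta_k$ is there precisely to absorb the mismatch between the resolvent evaluated at the old scale $\alpha_{k-1}$ and the new scale $\alpha_k$, which I would make quantitative using the resolvent identity $J_{\lambda A}(z) = J_{\mu A}\big(\tfrac{\mu}{\lambda}z + (1-\tfrac{\mu}{\lambda})J_{\lambda A}(z)\big)$.

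The core of the argument is a quasi-Fej\'er inequality: for any fixed point (equivalently, solution) $z^*$,
\[
\norm{z^{k+1} - z^*}^2 \le \norm{z^k - z^*}^2 - c\,\norm{z^{k+1} - z^k}^2 + \eta_k,
\]
where $c > 0$ and the perturbation obeys $\eta_k \le M\,|\alpha_k - \alpha_{k-1}|$ for a constant $M$ depending only on the (simultaneously established) bounded iterates. Firm nonexpansiveness of $J_{\alpha_k A}$ and $J_{\alpha_{k-1}B}$ furnishes the negative term $-c\norm{z^{k+1}-z^k}^2$, while the change of stepsize produces the error $\eta_k$, which is summable exactly because $\sum_k |\alpha_{k+1}-\alpha_k| < \infty$. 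From quasi-Fej\'er monotonicity I then extract the standard consequences: $\{z^k\}$ is bounded, $\norm{z^k - z^*}$ converges for every fixed point $z^*$, and $\norm{z^{k+1}-z^k} \to 0$.

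To pass to the limit, I would use that $\{\alpha_k\}\subset(\alpha_{\min},\alpha_{\max})$: along any weakly convergent subsequence $z^{k_j}\rightharpoonup \bar z$ I refine once more so that $\alpha_{k_j}\to\bar\alpha\in[\alpha_{\min},\alpha_{\max}]$. Since the residual $z^{k_j}-T_{\alpha_{k_j}}(z^{k_j})\to 0$ and resolvents depend continuously on the scaling, one gets $z^{k_j}-T_{\bar\alpha}(z^{k_j})\to 0$; the demiclosedness at $0$ of $I - T_{\bar\alpha}$ (the demiclosedness principle for the nonexpansive map $T_{\bar\alpha}$) then forces $\bar z$ to be a fixed point of $T_{\bar\alpha}$, hence a solution of the inclusion. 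Opial's lemma now applies — $\norm{z^k - z^*}$ converges for every fixed point and all weak cluster points are fixed points — giving a single weak limit $z^*$, and $x^* = \lim_k J_{\alpha_{k-1}B}(z^k)$ satisfies $0 \in (A+B)(x^*)$.

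I expect the main obstacle to be the second step: bounding the stepsize-variation error uniformly by $\eta_k = O(|\alpha_k - \alpha_{k-1}|)$. This requires the resolvent identity, Lipschitz dependence of the resolvent on its scaling parameter, and an a priori boundedness argument that is intertwined with the Fej\'er estimate itself, so the boundedness and the descent inequality must be bootstrapped together. The secondary difficulty is the demiclosedness step under the \emph{varying} operator $T_{\alpha_k}$, which I resolve by passing to a subsequence along which $\alpha_k$ converges and invoking continuity of resolvents in the stepsize.
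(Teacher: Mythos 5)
The paper does not actually prove this statement: its entire ``proof'' consists of the single sentence that the theorem is a restatement of Theorem~3.2 of \cite{lorenz2019non}, so there is no internal argument to compare yours against. What you have written is a reconstruction of the proof of that cited result, and your outline matches the known strategy: rewrite the step via $u^k = J_{\alpha_{k-1}B}(z^k)$ and $\theta_k = \alpha_k/\alpha_{k-1}$, use the resolvent identity to absorb the mismatch between the scales $\alpha_{k-1}$ and $\alpha_k$, derive a quasi-Fej\'er inequality whose perturbation is $O(|\alpha_k - \alpha_{k-1}|)$ and hence summable, and conclude by demiclosedness plus Opial's lemma. Two points deserve more care than your sketch gives them. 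First, the fixed-point set of the DRS operator $T_\alpha$ genuinely depends on $\alpha$ (only the shadow point $J_{\alpha B}(z^*)$ is $\alpha$-independent), so the quasi-Fej\'er estimate must be anchored to fixed points of a \emph{limiting} operator $T_{\bar\alpha}$; here the hypothesis $\sum_k |\alpha_{k+1}-\alpha_k| < \infty$ is what guarantees $\alpha_k \to \bar\alpha \in [\alpha_{\min},\alpha_{\max}]$, a fact your argument uses only implicitly when you ``refine once more so that $\alpha_{k_j}\to\bar\alpha$.'' Second, the bound $\eta_k \le M\,|\alpha_k-\alpha_{k-1}|$ with $M$ depending on bounded iterates is exactly the technical heart of \cite{lorenz2019non}, and you correctly identify that the boundedness and the descent inequality must be bootstrapped together --- but you do not carry out that bootstrap, so as written your proposal is a sound skeleton rather than a complete proof. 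Relative to the paper, which supplies no argument at all, your route is strictly more informative; relative to the cited source, it is the same proof with its two hardest steps left as declared intentions.
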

\begin{proof}
    It is the restatement of Theorem~3.2. of \cite{lorenz2019non}.
\end{proof}

\subsection{Proof of Theorem~\ref{theorem_pdhg4sdp_balancing}}
\ThPDHGforSDPB*
\begin{proof}
    Since $\theta_k = \frac{\alpha_k}{\alpha_{k-1}}$ and $\alpha_k\beta_k = R < \frac{1}{\lambda_{\max}\left(\mathcal{A}^T\mathcal{A}\right)}$ are automatically satisfied by Algorithm~\ref{algm:pdhg_sdp_pdb} and \ref{algm:pdhg_sdp_aa}, it suffices to shows $\alpha_k \in \left(\alpha_{\min}, \alpha_{\max}\right)$ and $\sum_{k=1}^{\infty}\left|\alpha_{k+1}-\alpha_{k}\right|<\infty$. To prove the boundedness of $\alpha_k$, we assume $\alpha_k$ is increased at each iteration without loss of generality, i.e., $\alpha_{k+1} = \frac{\alpha_k}{1-\epsilon_k} = \alpha_0\prod_{i=0}^{k+1} \frac{1}{1-\epsilon_i} = \alpha_0\prod_{i=0}^{k+1} \frac{1}{1-\epsilon_0  \eta^{i}}$. We wish to show $\left\{ \alpha_k \right\}$ is a convergent sequence, so that it is bounded. And it is sufficient to show $\left\{x_k\right\}$, where $x_{k+1} = \log \left(\prod_{i=0}^{k+1}\frac{1}{1-\epsilon_i}\right) = \sum_{i=0}^{k+1}\log\left(\frac{1}{1-\epsilon_0  \eta^{i}}\right)$, is convergent since $\alpha_0$ is a constant. Since $\lim _ {x \to 0} \frac{\ln (1+x)}{x} = 1$, we have
    \begin{align*}
        \lim_{i \to \infty} \frac{\log \left(\frac{1}{1-\epsilon_0 \eta^{i+1}}\right)}{\log \left( \frac{1}{1-\epsilon_0  \eta^{i}} \right)} = \lim_{i \to \infty} \frac{\log \left(\frac{1}{1-\epsilon_0  \eta^{i+1}}\right)}{\frac{\epsilon_0\eta^{i+1} }{1-\epsilon_0\eta^{i+1}}}\times \frac{\frac{\epsilon_0\eta^{i+1} }{1-\epsilon_0\eta^{i+1}}}{\frac{\epsilon_0\eta^{i} }{1-\epsilon_0\eta^{i}}} \times \frac{\frac{\epsilon_0\eta^{i} }{1-\epsilon_0\eta^{i}}}{\log \left(\frac{1}{1-\epsilon_0  \eta^{i}}\right)} = \lim_{i \to \infty} \frac{\eta - \eta^{i+1} \epsilon_0}{1-\eta^{i+1} \epsilon_0} =\eta < 1.
    \end{align*}
By ratio test, $x_n$ is convergent. We now show that $\sum_{k=1}^{\infty}\left|\alpha_{k+1}-\alpha_{k}\right|<\infty$. Since 
    $1-\epsilon_0\eta^k > \frac{1}{2} \Leftrightarrow \log\left(\frac{1}{2\epsilon_0}\right) > k\log\left(\eta\right) \Leftrightarrow \underbrace{\frac{\log\left(\frac{1}{2\epsilon_0}\right)}{\log\left(\eta\right)}}_{=k^*} > k$, 
    we have
    \begin{align*}
        \sum_{k=0}^{\infty} \left|\alpha_{k+1}-\alpha_k\right| & \leq \sum_{k=0}^{\infty}\left|\frac{\alpha_k}{1-\epsilon_k} - \alpha_k\right| + \sum_{k=0}^{\infty}\left|(1-\epsilon_k)\alpha_k-\alpha_k\right| + \sum_{k=0}^{\infty}\left|\alpha_k - \alpha_k\right| \\
        & \leq \sum_{k=0}^{\infty}\frac{\epsilon_k}{1-\epsilon_k}\alpha_k + \sum_{k=0}^{\infty}\epsilon_k\alpha_k \\
        & \leq \alpha_{\max}\sum_{k=0}^{\infty}\left(\frac{\epsilon_k}{1-\epsilon_k}+\epsilon_k\right) \\
        & = \alpha_{\max}\sum_{k=0}^{\infty}\left(\frac{\epsilon_0\eta^k}{1-\epsilon_0\eta^k}+\epsilon_0\eta^k\right) \\
        & \leq \alpha_{\max}\sum_{k=0}^{\left\lceil k^* \right\rceil}\left(\frac{\epsilon_0\eta^k}{1-\epsilon_0\eta^k}+\epsilon_0\eta^k\right) + \alpha_{\max}\sum_{k=\left\lceil k^* \right\rceil+1}^{\infty}\left(\frac{\epsilon_0\eta^k}{\frac{1}{2}}+\epsilon_0\eta^k\right) \\
        &= \alpha_{\max}\sum_{k=0}^{\left\lceil k^* \right\rceil}\left(\frac{\epsilon_0\eta^k}{1-\epsilon_0\eta^k}+\epsilon_0\eta^k\right) + 3\epsilon_0\alpha_{\max}\sum_{k=\left\lceil k^* \right\rceil+1}^{\infty}\eta^k \\
        & = \alpha_{\max}\sum_{k=0}^{\left\lceil k^* \right\rceil}\left(\frac{\epsilon_0\eta^k}{1-\epsilon_0\eta^k}+\epsilon_0\eta^k\right) + 3\epsilon_0\alpha_{\max}\frac{\eta^{\left\lceil k^* \right\rceil+1}}{1-\eta} < \infty.
    \end{align*}
We then conclude the proof by Theorem~\ref{theorem_drs}.    
\end{proof}

\subsection{Proof of Theorem~\ref{theorem_pdhg4sdp_drs}}
\ThPDHGforSDPDRS*
\begin{proof}
    Since $\langle C, X \rangle + \mathbb{I}_{\mathbb{S}_{+}^{n\times n}} (X) + \mathbb{I}_{=0} (\hat{X})$ and $\mathbb{I}_{=b} (\mathcal{A}(X) + \mathcal{T}(\hat{X}))$ are closed, convex, proper functions with respect to $\begin{pmatrix}
        X \\
        \hat{X}
    \end{pmatrix}$, we have
    \begin{align*}
        \begin{pmatrix} C \\
        0 \end{pmatrix} + \begin{pmatrix}\partial_{X}\mathbb{I}_{\mathbb{S}_{+}^{n\times n}} (X) \\
        0
        \end{pmatrix} + 
        \begin{pmatrix} 0 \\
        \partial_{\hat{X}}\mathbb{I}_{=0}     (\hat{X})
        \end{pmatrix}
    \end{align*}
    and
    \begin{align*}
        \begin{pmatrix}  \partial_{X}\mathbb{I}_{=b} (\mathcal{A}(X) + \mathcal{T}(\hat{X})) \\ 
        \partial_{\hat{X}}\mathbb{I}_{=b} (\mathcal{A}(X) + \mathcal{T}(\hat{X}))
        \end{pmatrix}
    \end{align*}
    are maximally monotone operators.  
    Since
    \begin{align*}
        \omega_i\min\{|1-\theta_{\text{min}}|, |1-\theta_{\text{max}}|\} \leq |(1-\omega_i+\omega_i\theta_i)-1| = \omega_i|1-\theta_i| \leq \omega_i\max\{|1-\theta_{\text{min}}|, |1-\theta_{\text{max}}|\},
    \end{align*}
    $\omega_i > 0,\ \forall i$ and $\sum_{i=0}^{\infty}\omega_i < \infty$, we have $\sum_{i=0}^{\infty}|(1-\omega_i+\omega_i\theta_i)-1| < \infty$. It implies $(1-\omega_i+\omega_i\theta_i)\rightarrow 1$ and $\prod_{i=1}^{k}(1-\omega_i+\omega_i\theta_i)\rightarrow c$ for a certain $c\in \mathbb{R}_{++}$.
    Therefore, we have 
    \begin{align*}
        \alpha_k = (1-\omega_k+\omega_k\theta_k)\alpha_{k-1} = \prod_{i=1}^{k} (1-\omega_i+\omega_i\theta_i)\alpha_0 \rightarrow c \alpha_0 > 0 \Longrightarrow \sum_{i=1}^{\infty}|\alpha_{k}-\alpha_{k-1}| < \infty.
    \end{align*}
    Moreover, since 
    \begin{align*}
        \theta_{\text{min}} = \min\{1,\theta_{\text{min}}\} \leq (1-\omega_i+\omega_i\theta_{\text{min}}) \leq (1-\omega_i+\omega_i\theta_i) \leq (1-\omega_i+\omega_i\theta_{\text{max}}) \leq \max\{1,\theta_{\text{max}}\} = \theta_{\text{max}},
    \end{align*}
    we have
    \begin{align*}
        \theta_{\text{min}}\alpha_0 \leq \alpha_k = \prod_{i=1}^{k} (1-\omega_i+\omega_i\theta_i)\alpha_0 \leq \theta_{\text{max}}\alpha_0,
        \end{align*}
    which implies $\alpha_k$ is uniformly bounded. We then conclude the proof by Theorem~\ref{theorem_drs}.
\end{proof}

%\subsection{Proof of Theorem~\ref{theorem_sharpness}}
%\ThSharpness*
%\begin{proof}
%Consider 
%\begin{align}
%\text{minimize} \quad & X \nn\\
%\text{subject to} \quad & \begin{pmatrix}
%X & x \\
%x & 1
%\end{pmatrix} \succeq 0 \nn\\
%& x \geq 0
%\label{nonsharp_SDP}
%\end{align}
%This SDP can be reformulated as
%\begin{align*}
%\text{minimize}_{x} \quad & x^2 \\
%\text{subject to} \quad & x \geq 0, 
%\end{align*}
%whose dual problem is 
%\begin{align*}
%\text{maximize}_{y} \quad & -\frac{y^2}{4} \\
%\text{subject to} \quad & y \geq 0, 
%\end{align*}
%and the Lagrangian is $\mathcal{L}(x,y) = x^2 - y x$. For an arbitrary $\alpha>0$, we have
%\begin{align*}
%\rho_{r}\left((\frac{\alpha}{2},0)\right) &= \frac{\max_{\hat{x}\in\left[\max\{\frac{\alpha}{2}-r, 0\},\frac{\alpha}{2}+r \right],\ \hat{y}\in\left[0,r\right]}\left\{\frac{\alpha^2}{4}-\frac{\alpha}{2}\hat{y}-\hat{x}^2\right\}}{r} < \frac{\frac{\alpha^2}{4}}{r} = \frac{\alpha^2}{4r} < \frac{\alpha^2}{2} = \alpha \textnormal{\textbf{dist}}((\frac{\alpha}{2},0),Z^*),
%\end{align*}
%for $r>\frac{1}{2}$. Therefore, \eqref{nonsharp_SDP} is not sharp.
%\end{proof}

\printbibliography
% \bibliography{references}
\end{document}